\newtheorem{theorem}{Theorem}[section]
\newtheorem{lemma}[theorem]{Lemma}
\newtheorem{proposition}[theorem]{Proposition}
\newtheorem{definition}[theorem]{Definition}
\newtheorem{conjecture}{Conjecture}
\newtheorem{prop}{Proposition}[section]
\newtheorem{remark}[prop]{Remark}
\makeatletter \@addtoreset{equation}{section} \makeatother
\def\Rc{{\mathrm {Rc}}}
\begin{document}

\title{Complete manifolds with bounded curvature and 
spectral gaps}

\author{Richard Schoen}
\author{Hung Tran}
\address{Department of Mathematics,
University of California, Irvine,
Irvine, CA 92697}
\thanks{The first named author was partially supported by NSF grant DMS-1404966}



\renewcommand{\subjclassname}{%
  \textup{2000} Mathematics Subject Classification}

\date{\today}

\begin{abstract} We study the spectrum of complete noncompact manifolds
with bounded curvature and positive injectivity radius. We give general
conditions which imply that their essential spectrum has an arbitrarily large finite number of gaps.
In particular, for any noncompact covering of a compact manifold, there is a metric
on the base so that the lifted metric has an arbitrarily large finite number of gaps in its
essential spectrum. Also, for any complete noncompact manifold with bounded curvature
and positive injectivity radius we construct a metric uniformly equivalent to the
given one (also of bounded curvature and positive injectivity radius) with an arbitrarily
large finite number of gaps in its essential spectrum.
\end{abstract}
\maketitle
\tableofcontents

\section{\textbf{Introduction}}
On a complete manifold, the Laplacian $\Delta$ acts as a self-adjoint operator on the space of smooth functions with compact support $C_c^\infty (M)$. There is a unique maximal self-adjoint extension to $L^2(M)$. Unlike the compact case, noncompact manifolds generally do not have pure point spectrum;
that is eigenvalues of finite multiplicity. For the example of $\mathbb{R}^n$ with a rotationally symmetric metric, see \cite{escobar86}. 
\begin{definition}
The essential spectrum, $\sigma_\text{ess}(M)$, is defined to be the set of real numbers which are either cluster points of the spectrum of $\Delta$ or eigenvalues with infinite multiplicity. 
\end{definition}
 
The essential spectrum turns out to be stable under compactly supported perturbations of the metric and, thus, is a function of the `ends' of $(M,g)$ (fundamental decomposition principle).  

In general terms the spectrum has been understood for complete manifolds with nonnegative
Ricci curvature. In fact, it was shown by J. Wang \cite{wang97} that if $\Rc(x)\geq -\delta(n)\frac{1}{r^2}$, for large r, and a small constant $\delta(n)$ depending on the dimension $n$, then the essential spectrum is $[0,\infty)$. In fact Wang \cite{wang97} shows, under his assumption on the Ricci curvature, that the $L^p$ essential spectrum is $[0,\infty)$ for all $p\geq 1$. That relies on work of K. T. Sturm \cite{sturm93} who showed the following.
\begin{theorem} \cite{sturm93}
Let M be a complete, non-compact Riemannian manifold with Ricci curvature bounded below and having volume growth uniformly sub-exponential. Then the $L^p$ essential spectra are the same for any $p\geq 1$.
\end{theorem}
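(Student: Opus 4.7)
The plan is to prove the stronger statement that the full $L^p$ spectrum of $\Delta$ is independent of $p$ for $p \in [1,\infty)$, from which the corresponding claim for essential spectra will follow. The primary tool is the heat semigroup $T_t = e^{-t\Delta}$, whose kernel $p_t(x,y)$ enjoys Gaussian upper bounds under the Ricci curvature lower bound.

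First I would establish that $T_t$ extends to a consistent family of strongly continuous semigroups on $L^p(M)$ for $1 \leq p < \infty$. Under a lower Ricci bound, Li-Yau- or Grigor'yan-type estimates yield
\[
p_t(x,y) \leq C(t)\, \exp\!\left(-\frac{d(x,y)^2}{(4+\varepsilon)\,t}\right)
\]
with controlled time dependence. Uniform sub-exponential volume growth, meaning that for every $\varepsilon > 0$ there exists $C_\varepsilon$ with $\mathrm{Vol}(B(x,r)) \leq C_\varepsilon e^{\varepsilon r}$ for all $x$ and $r$, then forces the integrals of $p_t(x,\cdot)$ and $p_t(\cdot,y)$ to be uniformly bounded, so that $T_t$ acts boundedly on $L^1$ and $L^\infty$ with norm growing subexponentially in $t$. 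Interpolation yields $\|T_t\|_{p\to p} \leq e^{\omega(t)}$ with $\omega(t)/t \to 0$, and in particular the spectral bound $s(-\Delta_p) \leq 0$ is the same for every $p$. I would then invoke the Hempel-Voigt/Arendt principle: for a consistent family of bounded $C_0$-semigroups on the $L^p$ scale with uniform resolvent bounds on a common right half-plane, the spectra $\sigma(\Delta_p)$ all coincide. Writing the resolvent as the Laplace transform of $T_t$ and plugging in the heat kernel estimates supplies those bounds.

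To pass from the full spectra to the essential spectra, note that any point in $\sigma(\Delta_p) \setminus \sigma_\text{ess}(\Delta_p)$ is an isolated eigenvalue of finite multiplicity with a finite-rank spectral projection; heat-kernel smoothing forces the range of this projection to lie in every $L^q$, and consistency of $T_t$ across $p$ forces the corresponding projections to agree. Hence the discrete parts of $\sigma(\Delta_p)$ match across $p$, and so do the essential parts. The main obstacle is executing the resolvent estimates at the endpoints $p=1$ and $p=\infty$: one must extract enough spatial decay from the Gaussian factor to dominate the potentially exponential growth of balls, and it is precisely here that \emph{uniform} (as opposed to merely pointwise) sub-exponentiality of the volume growth is indispensable.
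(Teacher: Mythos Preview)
The paper does not prove this theorem; it is quoted from Sturm \cite{sturm93} as background in the introduction, with no argument given. There is therefore nothing in the present paper to compare your proposal against.

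That said, your outline is essentially Sturm's own strategy: heat-kernel Gaussian bounds from the Ricci lower bound, uniform sub-exponential volume growth to control the $L^1$ and $L^\infty$ operator norms of $T_t$, interpolation, and then the Hempel--Voigt consistency principle to conclude $p$-independence of the spectrum. One point to be careful about: the step from coincidence of full spectra to coincidence of essential spectra is not quite as automatic as you suggest. Knowing that an isolated eigenvalue of finite multiplicity for $\Delta_p$ has eigenfunctions lying in all $L^q$ does not by itself show that the eigenvalue is also \emph{isolated and of finite multiplicity} for $\Delta_q$; you need that the Riesz projections themselves are consistent across the $L^p$ scale, which follows from writing them as contour integrals of the resolvent and using the resolvent consistency you have already established. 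With that refinement your sketch is sound.
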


This work was extended by Z. Lu and D. Zhou \cite{luzhou11}
who proved that the $L^p$ essential spectrum is $[0,\infty)$ under the assumption that
$\lim_{x\to\infty}Rc(x)\geq 0$. These authors proved the same result in case $M$ is a
gradient Ricci soliton with uniformly sub-exponential volume growth.

It has been less clear how the spectrum should behave for manifolds without the asymptotic
nonnegativity assumption on the Ricci curvature. Lu and Zhou made the following conjecture.
\begin{conjecture}\cite{luzhou11}
Let M be a complete, non-compact Riemannian manifold with Ricci curvature bounded below and the volume growth uniformly sub-exponential. Then, for any $p\geq 1$, the $L^p$-essential spectrum is $[0,\infty)$.
\end{conjecture}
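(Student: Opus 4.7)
The plan is to reduce first to the $L^2$ case. By the theorem of Sturm quoted above, under the hypotheses the $L^p$ essential spectra coincide for all $p \geq 1$, and therefore all equal $\sigma_{\text{ess}}(\Delta)$ on $L^2(M)$. So it suffices to prove $[0,\infty) \subseteq \sigma_{\text{ess}}(\Delta)$ as a subset of the $L^2$-spectrum; by Weyl's criterion this amounts to constructing, for every $\lambda \geq 0$, a sequence $\{f_n\} \subset C_c^\infty(M)$ of $L^2$-unit functions with supports going to infinity such that $\|(\Delta - \lambda) f_n\|_{L^2} \to 0$.

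The case $\lambda=0$ is Brooks' classical result: for each $\varepsilon > 0$, uniformly sub-exponential volume growth supplies smooth cutoffs $\phi_n$ of long concentric annuli $A_n = B(o, R_n + L_n) \setminus B(o, R_n)$, $R_n \to \infty$, satisfying $\|\nabla \phi_n\|_{L^2}/\|\phi_n\|_{L^2} < \varepsilon$; hence $0 \in \sigma_{\text{ess}}(\Delta)$.

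For a general $\lambda > 0$ I would introduce a radial oscillation: set $r(x) = d(x,o)$ and try $f_n(x) = \phi_n(r(x)) \cos(\sqrt{\lambda}\, r(x))$ with $\phi_n$ a Brooks cutoff as above. Expanding the Laplacian in geodesic polar coordinates, one finds (up to signs depending on the convention for $\Delta$) that $(\Delta - \lambda) f_n$ splits into three kinds of contributions: (i) cutoff-derivative terms proportional to $\phi_n''\cos(\sqrt{\lambda}r)$ and $\phi_n'\sin(\sqrt{\lambda}r)$, which tend to zero in $L^2$ as $L_n \to \infty$ thanks to the Brooks construction; (ii) a mixed term of the form $(\Delta r)\phi_n'\cos(\sqrt{\lambda}r)$, small provided $\Delta r$ is bounded on the thin shells supporting $\phi_n'$, which the Ricci lower bound guarantees by Laplacian comparison; and (iii) the critical term $\sqrt{\lambda}\,(\Delta r)\phi_n(r)\sin(\sqrt{\lambda}r)$, which is pointwise of the same order as $f_n$ itself.

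The decisive obstacle is term (iii): unlike the other pieces it cannot be shrunk by enlarging the cutoff, so driving $\|(\Delta - \lambda)f_n\|_{L^2}$ to zero forces $\Delta r$ to be small in an $L^2$-average over $A_n$. The Ricci lower bound via Laplacian comparison only bounds $\Delta r$ from one side (and, in the relevant asymptotic regime, by a constant that need not tend to zero), so one must combine the Bochner identity applied to $r$ with integration by parts and a pigeonhole argument exploiting sub-exponential volume growth, in order to extract annuli $A_n$ on which $\int_{A_n}(\Delta r)^2\,dV/\mathrm{Vol}(A_n)$ is small. I expect this step to be the main difficulty: without further structural input — such as an asymptotic nonnegativity of Ricci as in the work of Lu and Zhou, or an upper sectional bound — nothing rules out $\Delta r$ oscillating on scales finer than those detected by volume growth alone, and the abstract of the present paper suggests that this obstruction is in fact sharp enough to admit genuine gaps in the essential spectrum.
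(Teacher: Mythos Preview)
The statement you are trying to prove is a \emph{conjecture}, and the paper does not prove it --- it \emph{disproves} it. Theorem~\ref{intro1} constructs, on any noncompact covering $\tilde{M}$ of a compact manifold $M$, metrics whose $L^2$ essential spectrum has arbitrarily many gaps. Since $\tilde{M}$ covers a compact manifold it has Ricci curvature bounded below, and since a regular Riemannian covering has the same volume growth as its deck transformation group, taking $M$ with virtually nilpotent fundamental group (for instance a flat torus, with $\tilde{M}=\mathbb{R}^n$) gives polynomial, hence uniformly sub-exponential, volume growth. These are genuine counterexamples to the Lu--Zhou conjecture, as the paper explicitly notes just after the statement of Theorem~\ref{intro1}.

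Your own analysis correctly isolates the point of failure. Steps (i) and (ii) of your expansion are harmless, but step (iii), the term $\sqrt{\lambda}\,(\Delta r)\,\phi_n\sin(\sqrt{\lambda}r)$, cannot be controlled from sub-exponential volume growth and a one-sided Ricci bound alone. You propose to use Bochner plus pigeonhole to find annuli on which $(\Delta r)^2$ has small average, but this is exactly what fails: in the counterexamples of the paper the metric is built from many copies of a fixed compact piece glued along thin necks, and along geodesics $\Delta r$ is forced to oscillate at a fixed scale set by the building block, independent of how far out one goes. No amount of averaging over long annuli kills this term, and indeed the resulting essential spectrum is confined near the discrete spectrum $\mathcal S$ of the building blocks, with genuine gaps in between (Theorem~\ref{first_main}). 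So the obstruction you flagged in your last paragraph is not merely a technical difficulty but the actual mechanism behind the counterexamples; the conjecture is false and no repair of your argument is possible.
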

In this paper we give general conditions on a complete manifold under which there are a
large finite number of gaps in the essential spectrum. First we address the case of
coverings of compact manifolds.
\begin{theorem} \label{intro1}
Given any compact manifold $M$, any noncompact 
covering manifold
$\tilde{M}$ of $M$, and any positive integer $G$, there is a metric on $M$ so that the lifted 
metric to $\tilde{M}$ has at least $G$ gaps in its $L^2$ essential spectrum.
\end{theorem}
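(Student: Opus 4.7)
The plan is to construct a one-parameter family of metrics $\{g_\epsilon\}_{\epsilon>0}$ on $M$ with progressively thinner necks around a carefully chosen hypersurface $\Sigma\subset M$, so that the lifted Laplacian on $(\tilde M,\tilde g_\epsilon)$ has essential spectrum concentrated in narrow bands around the discrete eigenvalues of a closed limit operator. For $\epsilon$ sufficiently small these bands become separated, producing the required $G$ gaps.

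First I would choose $\Sigma$: a compact embedded codimension-one submanifold of $M$ whose preimage $\pi^{-1}(\Sigma)\subset\tilde M$ cuts $\tilde M$ into relatively compact components, each isometric to a common compact fundamental piece $F_0$. Such $\Sigma$ exists for any covering of a compact manifold, for instance by taking $\Sigma=\pi(\partial F)$ for a compact polyhedral fundamental domain $F\subset\tilde M$. I would then fix a reference metric $g_0$ on $M$ and a metric $h$ on $\Sigma$, and in a collar neighborhood $U\cong\Sigma\times(-1,1)$ of $\Sigma$ define
\[
g_\epsilon=\rho_\epsilon(t)^2\,h+dt^2,
\]
where $\rho_\epsilon\colon(-1,1)\to(0,\infty)$ interpolates smoothly between $\rho_\epsilon(\pm 1)=1$ and $\rho_\epsilon(0)=\epsilon$; outside $U$, $g_\epsilon=g_0$. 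This pinches the cross-section of the neck to width $\epsilon$, so that the separating hypersurface $\Sigma\times\{0\}$ has $(n-1)$-volume $O(\epsilon^{n-1})$.

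The spectral heart of the argument is Dirichlet-Neumann bracketing in the periodic setting. Cut $\tilde M$ along $\pi^{-1}(\Sigma\times\{0\})$ to produce a disjoint union of isometric copies of a compact manifold-with-boundary $F_0^\epsilon$. A Floquet-type decomposition of the $\Gamma$-periodic Laplacian (a direct integral over the unitary dual of the deck group $\Gamma$), combined with min-max, gives
\[
\sigma_{\mathrm{ess}}(\tilde M,\tilde g_\epsilon)\subseteq\bigcup_k\bigl[\lambda_k^N(F_0^\epsilon),\lambda_k^D(F_0^\epsilon)\bigr],
\]
where $\lambda_k^{N/D}$ are the Neumann/Dirichlet eigenvalues of $F_0^\epsilon$. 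Since $\partial F_0^\epsilon$ has vanishing $(n-1)$-volume as $\epsilon\to 0$, capacity estimates of Rauch-Taylor/Cioranescu-Murat type yield $\lambda_k^D(F_0^\epsilon)-\lambda_k^N(F_0^\epsilon)\to 0$ for each fixed $k$, with both converging to a common limit $\mu_k$, the $k$-th eigenvalue of a closed compact limit problem (roughly $F_0$ with its boundary collapsed). Since the $\mu_k$ are isolated, one can pick $K$ large enough that at least $G$ strict inequalities $\mu_k<\mu_{k+1}$ occur for $k<K$. For $\epsilon$ small the intervals $[\lambda_k^N(F_0^\epsilon),\lambda_k^D(F_0^\epsilon)]$ with $k\le K$ become pairwise disjoint, and the resulting $G$ gaps descend to $\sigma_{\mathrm{ess}}(\tilde M,\tilde g_\epsilon)$.

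The main obstacle is the bracketing inclusion for a general (possibly non-normal or non-abelian) covering, since the classical Floquet-Bloch theorem applies directly only to abelian $\Gamma$. One must either set up a direct integral decomposition over the unitary dual of $\Gamma$ or argue variationally by building approximate eigenfunctions of $\tilde M$ from Dirichlet eigenfunctions of $F_0^\epsilon$ localized inside single fundamental pieces, controlling tunneling through the thin collar with exponential decay estimates. The analytic ingredient---the uniform Dirichlet-Neumann convergence up to $k=K$---is more routine, via explicit trial functions on the collapsing neck, and is tractable since $K$ is fixed once $G$ is fixed.
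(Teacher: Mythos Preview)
Your approach is genuinely different from the paper's, and the obstacle you flag at the end is in fact fatal for the generality claimed. The Dirichlet--Neumann bracketing inclusion
\[
\sigma_{\mathrm{ess}}(\tilde M,\tilde g_\epsilon)\subseteq\bigcup_k\bigl[\lambda_k^N(F_0^\epsilon),\lambda_k^D(F_0^\epsilon)\bigr]
\]
is \emph{not} a consequence of the usual form bracketing when the decomposition has infinitely many pieces: in that setting each Neumann (resp.\ Dirichlet) eigenvalue of $F_0^\epsilon$ occurs with infinite multiplicity in the decoupled operator, so the min--max comparison of the $k$-th eigenvalues degenerates and yields no gap information. To recover the band inclusion one needs a Floquet-type fibering of $L^2(\tilde M)$, which is available for abelian deck groups and was extended by Lled\'o--Post to certain regular coverings, but the theorem here concerns \emph{arbitrary} noncompact coverings, including non-normal ones that have no transitive deck group at all. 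Your suggested fixes (direct integral over the unitary dual, or tunneling estimates) are exactly the hard part and are not known to go through in this generality; indeed the paper explicitly presents its result as a generalization beyond the class Post and Lled\'o--Post could treat. A secondary issue is the existence of a \emph{smooth} separating hypersurface $\Sigma$ with simply connected complement: taking $\pi(\partial F)$ for a polyhedral fundamental domain does not give a smooth $\Sigma$, and without simple connectivity of $M\setminus\Sigma$ the lifted pieces need not be mutually isometric.

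The paper sidesteps both problems by abandoning hypersurface necks and Floquet/bracketing entirely. It conformally blows up a single Euclidean ball in $M$ so that most of $M$ becomes a large spherical cap (the ``core'' $X$, modeled on the round $\mathbb S^n$) while the remainder is a small ``neck'' $N$ with first Dirichlet eigenvalue $\lambda_1(N)\to\infty$. The only covering-theoretic input is the elementary fact that $\lambda_1$ of a domain does not decrease when lifted to a cover, so the decomposition persists on $\tilde M$. To show a given $\lambda\notin\mathcal S$ (the sphere spectrum) is outside $\sigma(\tilde M)$, the paper proves a direct resolvent estimate $\|u\|_{L^2}\le c\|(\Delta+\lambda)u\|_{L^2}$ by splitting $u=u_0+u_1$ with $u_0$ in an explicit finite-dimensional-per-component ``approximate eigenspace'' built from cut-off sphere eigenfunctions; $u_0$ is controlled because $\lambda$ lies above the relevant sphere eigenvalues, $u_1$ on the cores because it is orthogonal to the low eigenspace, and everything on $N$ because $\lambda_1(N)$ is large. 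Existence of essential spectrum in each band is then shown by transplanting sphere eigenfunctions onto infinitely many core components. No group structure on the cover is used anywhere.
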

It is noted that a regular Riemannian covering has the same volume growth as its deck transformation group. 
In particular this gives counterexamples to the Lu-Zhou conjecture. In general, however, our manifolds do not necessarily satisfy the sub-exponential volume growth. For instance, if the compact manifold 
$M$ is hyperbolic, then its fundamental group has exponential volume growth. 
This theorem also generalizes
work of O. Post \cite{post03} and of F. Lled\'o and O. Post \cite{lledpost08} who
used Floquet theory to make a similar construction for a special class of covering
manifolds. See also the more refined results of A. Khrabustovskyi \cite{khra12}
and limiting results obtained by P. Exner and O. Post \cite{exnerpost05}.

Our second main result removes the covering condition entirely and works for general
complete manifolds of bounded curvature and positive injectivity radius.
\begin{theorem} \label{intro2}
Let $(M,g_0)$ be a complete noncompact Riemannian manifold of bounded curvature and positive injectivity radius. Given any positive integer $G$ there is a metric
$g$ on $M$ such that $(M,g)$ has bounded curvature and positive injectivity radius,
the eigenvalues of $g$ with respect to $g_0$ are bounded above and below by positive 
constants, and the $L^2$ essential spectrum of $g$ has at least $G$ gaps. 
\end{theorem}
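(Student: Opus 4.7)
The strategy is to reduce the theorem to a local construction that we can transplant uniformly across $M$, using the bounded geometry of $(M, g_0)$ as the substitute for the deck transformation symmetry available in Theorem \ref{intro1}. Because $g_0$ has bounded sectional curvature and positive injectivity radius, a standard packing argument produces a constant $r_0>0$ and a uniformly discrete, uniformly dense set $\{x_i\} \subset M$ such that the balls $B(x_i, r_0)$ are pairwise disjoint while $\{B(x_i, 2r_0)\}$ covers $M$, and each enlarged ball $B(x_i, 3r_0)$ is bi-Lipschitz to a fixed Euclidean ball with constants independent of $i$. This uniformity lets us build one local model and deploy it at every $x_i$.

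The first step is to design the local model. On a Euclidean ball $B \subset \mathbb{R}^n$ equipped with a metric uniformly equivalent to the Euclidean one, I would construct a replacement metric $\tilde g$ on $B$, still uniformly equivalent to Euclidean with universal constants, that agrees with the Euclidean metric near $\partial B$, has bounded curvature, and whose Dirichlet Laplacian possesses $G+1$ eigenvalues $\mu_1 < \cdots < \mu_{G+1}$ separated by prescribed large gaps. This is the same kind of resonator construction that should underlie Theorem \ref{intro1}: one inserts nested bumps or wells within $B$ whose normal modes concentrate at well-separated frequencies. Since $B$ is fixed, the required spectral features can be achieved while keeping conformal factors in a bounded range, compatible with uniform equivalence.

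The second step is to transplant $\tilde g$ into a neighborhood of each $x_i$ through the uniform bi-Lipschitz charts and to interpolate to $g_0$ outside the charts in a uniformly controlled way, producing a global metric $g$ on $M$ that is uniformly equivalent to $g_0$ and retains bounded curvature and positive injectivity radius. To convert the local gaps into global ones I would apply Dirichlet-Neumann bracketing to the decomposition of $M$ into the modified balls $\Omega_i$ and their complement. By making the complementary region relatively thin and its Neumann eigenvalues lie above $\mu_{G+1} + 1$, and noting that the Dirichlet and Neumann spectra on each $\Omega_i$ differ from $\{\mu_k\}$ by at most a uniformly small amount, one concludes that $\sigma(\Delta_g) \subset \bigcup_{k=1}^{G+1}[\mu_k-\varepsilon, \mu_k+\varepsilon] \cup [\mu_{G+1}+1, \infty)$. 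A Weyl sequence built from Dirichlet eigenfunctions supported in an infinite sequence of distinct $\Omega_i$'s places each $\mu_k$ in $\sigma_{\mathrm{ess}}(\Delta_g)$, producing at least $G$ gaps.

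The main obstacle is reconciling the designed spectral structure with the uniform equivalence constraint: the familiar route to large gaps is to collapse to a lower-dimensional skeleton, but this is ruled out because $g$ and $g_0$ must have eigenvalues bounded above and below. The gaps must therefore come from the geometry of bounded resonators rather than from near-degeneration, and the hardest analytic point is exhibiting a compactly supported bounded perturbation of the Euclidean metric on $B$ whose Dirichlet Laplacian has $G$ prescribed gaps. Controlling this local model, and verifying that its transplant preserves curvature bounds and injectivity radius, will drive the proof and parallel the corresponding ingredient in Theorem \ref{intro1}.
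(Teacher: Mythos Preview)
Your outline rests on a misreading of the uniform-equivalence requirement, and this misreading is what produces the obstacle you flag at the end. The theorem asks only that the eigenvalues of $g$ relative to $g_0$ be bounded by \emph{some} positive constants; these constants may depend on $G$. Once $G$ is fixed you are free to introduce a scaling parameter $\epsilon>0$ depending on $G$, and a conformal factor ranging between $c_1\epsilon$ and $c_2\epsilon^{-1}$ still satisfies the hypothesis. The paper exploits exactly this freedom: it first flattens each ball $B_1(p_j)$ to Euclidean, then applies a conformal factor that blows up the ball into a large cap of the unit sphere (the core $X$) while shrinking the complement $N$ by a factor of order $\epsilon$, forcing $\lambda_1(N,g)\geq c\epsilon^{-2}$. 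The resulting manifold lies in the class $\mathcal M_0(\bar\rho,0,\Lambda)$ with the round sphere as the single model $X_\alpha$, and Theorem~\ref{second_main} (via the approximate-eigenspace estimates of Sections~4--6, not bracketing) yields the $G$ gaps.

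Because you rule out this scaling, your bracketing argument is left with two unsupported steps. First, you assert that the complement of the balls can be made ``relatively thin'' with Neumann eigenvalues above $\mu_{G+1}+1$; but the complement of a maximal packing of disjoint $r_0$-balls in a bounded-geometry manifold has pieces of definite diameter, so without scaling the metric its Neumann spectrum begins near zero. Second, you claim the Dirichlet and Neumann spectra of each $\Omega_i$ differ only slightly from the designed $\{\mu_k\}$; on a fixed domain these can differ by $O(1)$, and closing that discrepancy again requires some small-neck or small-boundary mechanism. Both issues dissolve once you allow $G$-dependent scaling, at which point the paper's core/neck construction with the sphere as model is available and the ``bounded resonator'' difficulty disappears.
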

There are earlier papers which construct complete manifolds with gaps in their essential spectrum.
E. B. Davies and E. M. Harrell \cite{daviesharrell87} proved at least one gap for certain periodic conformally flat metrics. E. L. Green \cite{green94} showed that there are an arbitrary finite 
number of gaps for certain $2$-dimensional conformally flat metrics.

It is much more difficult to construct complete manifolds of bounded geometry with an infinite
number of gaps in the essential spectrum. J. Lott \cite{lott01} constructed a non-periodic, negatively curved, finite area $2$-dimensional surface with an infinite number of gaps in its essential spectrum.
Some intuition for the behavior of the spectrum of Riemannian manifolds comes from spectral results 
for Schr\"odinger operators on $\mathbb R^n$. 
For Schr\"odinger operators, $-\Delta+V(x)$, with a periodic potential $V$ on $\mathbb R^n$ the 
Bethe-Sommerfeld conjecture says that for $n\geq 2$ there can be at most a finite number of gaps in the essential spectrum. It has been solved for smooth potentials by L. Parnovski \cite{parnovski08}.
It would be interesting to understand whether this has a Riemannian manifold analogue.

Our theorems are, in fact, special cases of much more general results which hypothesize that
the manifold is made up of certain building blocks with sufficient control on
the geometry of the pieces. See Section 2 for the detailed definitions. Of course our manifolds
do not have nonnegative Ricci curvature, but some of them do have bounded positive scalar curvature.
In fact, our construction was motivated by the first author's \cite{schoen88} constructions of certain complete conformally flat metrics of constant positive scalar curvature.


Here is a sketch of our arguments. To prove the existence of gaps, there are two ingredients: first to show that certain real numbers are not in the spectrum and, second, to show that the intervals
between these numbers intersect non-trivially with the essential spectrum. Both steps reduce to various $L^2$ estimates. For the first step, we employ a space of approximate eigenfunctions 
coming from our building blocks. We decompose any smooth function as a sum of its projection on the approximate eigenspace plus the orthogonal part $u=u_0+u_1$. The projection $u_0$ can be controlled by the choice of our approximations. The orthogonal part $u_1$ over each building block also depends on that construction. Over the neck region, the $L^2$ norm of $u_1$ is controlled by the assumption of large Dirichlet eigenvalue.

For the second step, we construct functions which are sufficiently close to being a potential eigenfunction. A general method works for dimensions $n\geq 4$. For all dimensions including $n=2,3$, we require more control over our manifolds.   
 
The paper is organized as follows. In Section 2, we give the definitions of various classes of manifolds which all have bounded geometry and positive injectivity radius. We also obtain some preliminary
results that will be used later. Section 3 provides examples of constructions that satisfy the requirements of the classes defined in Section 2. Section 4 discusses the approximate kernel and corresponding estimates for $u_0$ and $u_1$. The estimate over the neck region is proven in Section 5.  Section 6 provides the proof of existence of points of the essential spectrum in the intervals. Finally, Section 7 completes the second step in our scheme and gives the proofs of main theorems.  
\section{\textbf{Definition of Classes of Manifolds and Preliminaries}}
We begin this section with a brief discussion of some of the basic geometric notions
which may be less familiar to analysts. The {\it injectivity radius} of a Riemannian
manifold $M$ at a point $p\in M$ is the largest number $r$ for which all points $q$
of distance $r$ from $p$ can be joined to $p$ by a unique minimizing geodesic. The
injectivity radius of $M$ is the infimum of the injectivity over all points of $M$. For a
non-compact manifold this number could be $0$ as is the case for a complete
non-compact manifold with finite volume. For compact manifolds with curvature bounded
from above and small injectivity radius, there is a closed geodesic whose length is
twice the injectivity radius. In Figure \ref{surface1}, the injectivity radius will be half the
circumference of the neck. 

Some of our results refer to {\it covering manifolds}, so we recall that a compact
manifold $M$ which is not simply connected has a universal covering manifold
$\tilde{M}$ so that $M$ is the quotient of $\tilde{M}$ by a group of deck transformations
which is isomorphic to the fundamental group of $M$. When $M$ has a Riemannian
metric $g$ we always endow $\tilde{M}$ with the metric which is pulled back from
$M$ under the covering projection. This makes $\tilde{M}$ into a complete Riemannian
manifold which is locally isometric to the base manifold. The deck transformations
are then global isometries of $\tilde{M}$. Basic examples of this
construction include flat metrics on the torus $\mathbb T^n$ in which the universal cover is
$\mathbb R^n$ and the deck group is $\mathbb Z^n$ generated by translations
in the directions of a lattice. Another basic example is a compact hyperbolic manifold
for which the universal cover is $\mathbb H^n$ and the manifold is the quotient 
of $\mathbb H^n$ by a discrete group of isometries acting freely on $\mathbb H^n$.

We now give a precise definition of the class of manifolds for which we
can show that there are an arbitrarily large number of spectral gaps. Each of these
will be a complete non-compact manifold with bounded curvature and positive injectivity
radius. Although some of them will arise as coverings of compact manifolds, there is
no a priori assumption of symmetry.
\subsection{Definitions}
We fix a finite collection of compact Riemannian $n$-manifolds $\mathcal X=\{X_1,\ldots,X_p\}$
and we denote the metric on $X_\alpha$ by $g_\alpha$. We let $\mathcal S$ denote
the union of the spectra of the $X_\alpha$. The spectral gaps for our manifold will
be roughly the first prescribed number of intervals of $\mathbb R\setminus\mathcal S$.

We first choose parameters $\bar{\rho}>0$, $\bar{\delta}>0$ and $\Lambda>1$. 
\begin{definition}
A complete manifold 
$(M,g)$ is said to be in the class $\mathcal M(\bar{\rho},\bar{\delta}, \Lambda)$ with respect to $\mathcal X$ if $M$ is the
union of a `core' domain $X$ and a `neck' region $N$ which overlap on a disjoint
union of annuli. We make the following assumptions on $X$ and $N$:
\begin{enumerate}
\item Each connected component $\hat{X}$ of $X$ is diffeomorphic to $X_\alpha$, for some
$\alpha\in\{1,\ldots,p\}$, with a finite union of balls removed. Under such an identification 
the domain $\hat{X}$ has boundary consisting of geodesic spheres in $(X_\alpha,g_\alpha)$ of radius at most $\bar{\rho}$.
\item Under the identification, the metrics $g$ and $g_\alpha$ are within $\bar{\delta}$
of each other in the $C^1(g_\alpha)$ norm. This means that the difference tensor $g-g_\alpha$
has $C^1$ norm (measured with respect to $g_\alpha$) bounded by $\bar{\delta}$.
\item The connected components of $X\cap N$ are of the form $B_{2\rho}\setminus B_\rho$
where $\partial B_\rho$ is a connected component of $\partial\hat{X}$ for some core
component $\hat{X}$ and $\rho\leq \bar{\rho}$.
\item The lowest Dirichlet eigenvalue of $N$ is at least $\Lambda$.
\end{enumerate}
\end{definition}
\begin{remark} If $N$ is unbounded in $M$, then the lowest
Dirichlet eigenvalue is defined to be the infimum of the lowest Dirichlet eigenvalue over
bounded subdomains of $N$.
\end{remark}
\begin{remark}
In examples, the assumption on the Dirichlet eigenvalue can often be obtained by scaling
the metric to make the neck small. 
\end{remark}
For part of our work we will need stronger assumptions in dimensions $2$ and $3$
which we now describe. 
\begin{definition}A manifold $M$ belongs to 
$\mathcal M_0(\bar{\rho},\bar{\delta},\Lambda)$ if it belongs to $\mathcal M(\bar{\rho},\bar{\delta},\Lambda)$ and there is an infinite subset $\mathcal S_0\subseteq \mathcal S$ such that
for any $\lambda\in\mathcal S_0$ there exists $X_\alpha\in\mathcal X$ such that $\lambda$
is an eigenvalue of $X_\alpha$ and there is an eigenfunction for $\lambda$ which
vanishes at the centers of the balls which are removed in the construction of $M$.
\end{definition}
Examples of manifolds of this type occur when one of the $X_\alpha$ has only a single
ball removed and the multiplicity of an infinite number of eigenvalues of $X_\alpha$
is at least two (so that an eigenfunction exists which vanishes at the center of the removed ball). For example if $X_\alpha$ is homogeneous there is an eigenfunction for any eigenvalue which vanishes
at a chosen point. If some $X_\alpha$ is a round sphere and the centers of the balls all lie
on an equator then there is an eigenfunction which vanishes at the centers for any
eigenvalue (for any degree there is a homogeneous harmonic polynomial which is
divisible by a chosen linear function).

A second class for which we will obtain our results in all dimensions is a class we
will call $\mathcal M_1(\bar{\rho},\bar{\delta},\Lambda)$. 
\begin{definition} A 
manifold is in $\mathcal M_1(\bar{\rho},\bar{\delta},\Lambda)$ if in addition to being in $\mathcal M(\bar{\rho},\bar{\delta},\Lambda)$,
each connected component $\hat{N}$ of $N$ has a diameter bounded by $\Lambda$, number
of boundary components bounded above by $\Lambda$, and volume bounded above by $\bar{\delta}$. 
\end{definition}
An example to think of is a Delaunay surface with small neck size, so that each component of $N$ 
is an annulus of small area. Recall that the Delaunay surfaces are the surfaces of revolution of
constant mean curvature in $\mathbb R^3$ (see \cite{eells87}). They look roughly
like a singly periodic tower of spheres connected by necks (see Figure \ref{surface1}). 

Note that our definition allows the possibility that the number of boundary
components of each connected component of $N$ be large but bounded.

\subsection{Preliminaries}
Here we collect various useful estimates on domains in a manifold with bounded geometry. 
First, it is observed that bounded geometry implies the metric is uniformly equivalent to the Euclidean metric \cite{eichhorn91} in balls of fixed radius. 

First, in this work we will need slight modifications of the standard Poincar\'e and Sobolev inequalities for functions in an annulus. 

We assume that we have an annulus $A=B_{r_0}\setminus B_{r_1}$ in $\mathbb R^n$
with a metric $g$ which is uniformly equivalent to the Euclidean metric; specifically for a positive
constant $C_1$ and all $a\in\mathbb R^n$
\[ C_1^{-1}\sum_{i=1}^na_i^2\leq\sum_{i,j=1}^ng^{ij}a_ia_j\leq C_1\sum_{i=1}^na_i^2.
\] 
Then the following estimates hold. 

\begin{lemma}\label{inequality} Suppose we have an annulus as above. 
\begin{enumerate}
\item \label{poincare} For any smooth function $f$ 
with $f=0$ on the inner boundary $\partial B_{r_1}$, there is a constant depending only on $r_0$, $r_1$ and $C_1$ such that,
\[ \int_Af^2\ dv\leq c\int_A\|\nabla_{g}f\|^2\ dv.
\]
\item \label{sobolev} Assume $n\geq 3$. For any smooth function $f$ on $A$ with $f=0$ on $\partial B_{r_0}$,
there is a constant depending only on $n$ and $C_1$ (independent of $r_0$ and $r_1$) such that, 
\[(\int_A f^{\frac{2n}{n-2}}\ dv)^{\frac{n-2}{n}} \leq c\int_A\|\nabla_g f\|^2\ dv.
\]
\end{enumerate}
\end{lemma}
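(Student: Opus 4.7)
\medskip
\noindent\textbf{Proof plan.} Because $g$ is uniformly equivalent to the Euclidean metric on $A$ with constant $C_1$, both volume elements and gradient norms with respect to $g$ differ from their Euclidean counterparts only by multiplicative constants depending on $n$ and $C_1$. The plan is therefore to prove both inequalities in the flat metric and then pay these dimensional constants to transfer back to $g$. The Poincar\'e inequality (\ref{poincare}) is elementary, with the constant allowed to depend on $r_0$ and $r_1$; the Sobolev inequality (\ref{sobolev}) demands a constant independent of the two radii, and this is where the real work lies.

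For part (\ref{poincare}), I would parametrize $A$ by $x = s\omega$ with $s \in [r_1, r_0]$ and $\omega \in S^{n-1}$. Since $f(r_1, \omega) = 0$, the fundamental theorem of calculus and Cauchy--Schwarz give
\[
f(r,\omega)^2 = \Bigl(\int_{r_1}^{r} \partial_s f(s,\omega)\,ds\Bigr)^2 \le (r_0-r_1)\int_{r_1}^{r_0} |\partial_s f(s,\omega)|^2\,ds.
\]
Integrating against $r^{n-1}\,dr\,d\omega$ over $A$ and bounding the weight $r^{n-1}/s^{n-1}$ by a constant depending only on $r_0, r_1$ produces $\int_A f^2 \le c(r_0,r_1)\int_A |\nabla f|^2$. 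Reverting to the metric $g$ costs only a $C_1$-dependent factor.

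For part (\ref{sobolev}), the key step is to build a controlled $H^1$ extension of $f$ to all of $\mathbb R^n$ and then apply the classical Euclidean Sobolev inequality. Extending $f$ by zero outside $B_{r_0}$ is free, since $f$ vanishes on $\partial B_{r_0}$, and yields a compactly supported $H^1$ function $F$ on $\mathbb R^n \setminus \bar B_{r_1}$. The difficulty is crossing the inner sphere $\partial B_{r_1}$, on which $F$ need not vanish. I would resolve this using the spherical inversion $T(x) = r_1^2\, x/|x|^2$: set $\hat F = F$ for $|x| \ge r_1$, $\hat F(x) = F(T(x))$ for $0 < |x| < r_1$, and $\hat F(0) = 0$ (valid because $F$ has compact support, so $F \circ T$ vanishes near the origin). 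Since $T$ fixes $\partial B_{r_1}$ pointwise the traces match and $\hat F \in W^{1,2}(\mathbb R^n)$. A direct change-of-variables computation gives $|\nabla_x \hat F(x)| = (r_1/|x|)^2 |\nabla F(T(x))|$ on $B_{r_1}$, and substituting $y = T(x)$ produces
\[
\int_{B_{r_1}} |\nabla \hat F|^2\,dx = \int_{\mathbb R^n \setminus B_{r_1}} \Bigl(\frac{r_1}{|y|}\Bigr)^{\!2n-4} |\nabla F(y)|^2\,dy \le \int_{\mathbb R^n \setminus B_{r_1}} |\nabla F|^2\,dy,
\]
where the final inequality uses $|y| \ge r_1$ together with $n \ge 3$. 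Hence $\|\nabla \hat F\|_{L^2(\mathbb R^n)}^2 \le 2\|\nabla f\|_{L^2(A)}^2$. Applying the Sobolev inequality on $\mathbb R^n$ to $\hat F$ and restricting the $L^{2n/(n-2)}$ norm back to $A$ yields the desired inequality in the Euclidean metric with a universal constant; uniform equivalence transfers it to $g$.

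The main obstacle is precisely this bounded extension across $\partial B_{r_1}$: extending by zero would violate $H^1$ regularity, while a linear or harmonic filling of $B_{r_1}$ would introduce a constant depending on $r_1$. The conformal inversion $T$ is exactly what avoids both issues, and the hypothesis $n \ge 3$ enters because the gradient weight $(r_1/|y|)^{2n-4}$ must remain bounded on the exterior of $B_{r_1}$.
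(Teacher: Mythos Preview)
Your argument is correct for both parts. Part~(\ref{poincare}) is handled the same way in spirit as the paper (which simply invokes the mixed Dirichlet--Neumann eigenvalue and the metric equivalence), and your explicit radial integration is a perfectly good realization of that.

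For part~(\ref{sobolev}) you take a genuinely different route from the paper. The paper reduces the $L^2$ Sobolev inequality to the $L^1$ version and hence to a relative isoperimetric inequality $Vol(\Omega)\leq c\,Vol(\partial\Omega\setminus\partial B_{r_1})^{n/(n-1)}$ for $\Omega\subset A$; it proves this by combining the ordinary isoperimetric inequality with the observation that the radial projection $P(x)=r_1 x/|x|$ onto the inner sphere decreases hypersurface volumes, so the portion of $\partial\Omega$ on $\partial B_{r_1}$ is dominated by the rest. Your approach instead stays at the $W^{1,2}$ level: extend by zero across the outer sphere, reflect by the Kelvin inversion $T(x)=r_1^2 x/|x|^2$ to fill the inner ball, check via the change of variables that the reflected Dirichlet energy picks up the weight $(r_1/|y|)^{2n-4}\leq 1$, and invoke the global Sobolev inequality on $\mathbb R^n$. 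Both arguments correctly isolate where $n\geq 3$ enters and both yield a constant independent of $r_0,r_1$. The paper's route is more geometric and delivers the stronger $L^1$ Sobolev/isoperimetric statement as a byproduct; yours is more analytic and arguably cleaner for the $L^2$ statement actually needed, since the single Jacobian computation replaces the Federer--Fleming reduction and the projection argument.
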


\begin{proof} 
For the  Poincar\'e inequality, it is noted that for the Euclidean case the constant is the inverse of the lowest eigenvalue
for the problem with Dirichlet condition on the inner boundary and Neumann on the outer
boundary. Because the metric $g$ is uniformly equivalent to the Euclidean metric, each term of the inequality only varies within multiplicative bounds determined by that equivalence, so the result follows.

The Sobolev inequality follows in a standard way from the corresponding $L^1$
inequality
\[ (\int_Af^{\frac{n}{n-1}}\ dv)^{\frac{n-1}{n}}\leq c\int_A\|\nabla_g f\|\ dv
\]
for functions $f$ which vanish on the outer boundary.
That, in turn, is equivalent to the isoperimetric inequalty,
\[ Vol(\Omega)\leq cVol(\partial\Omega\setminus\partial B_{r_1})^{\frac{n}{n-1}}
\]
for any $\Omega\subseteq A$. 
Note that it suffices to prove the inequality for the
Euclidean metric since both sides have bounded ratio (with bound depending on $C_1$)
with the corresponding quantity for the metric $g$. We note that the standard isoperimetric 
inequality for $\Omega$ may be written
\[ Vol(\Omega)\leq c\Big(Vol(\partial\Omega\setminus\partial B_{r_1})+Vol(\partial\Omega\cap
\partial B_{r_1})\Big)^{\frac{n}{n-1}}.
\]  
Next we observe that the radial projection map $P:A\to\partial B_{r_1}$ given by
$P(x)=r_1 x/|x|$ reduces volumes of hypersurfaces. It thus follows that
\[ Vol(P(\partial\Omega\setminus\partial B_{r_1}))\leq Vol(\partial\Omega\setminus \partial B_{r_1}).
\]
On the other hand any ray through a point of $\partial\Omega\cap\partial B_{r_1}$
must intersect $\partial\Omega$ at a second point, and so we have 
\[ \partial\Omega\cap\partial B_{r_1}\subseteq P(\partial\Omega\setminus\partial B_{r_1}).
\]
Combining this information with the isoperimetric inequality we have
\[ Vol(\Omega)\leq 2^{\frac{n}{n-1}}cVol(\partial\Omega\setminus\partial B_{r_1})^{\frac{n}{n-1}}.
\]
This completes the proof of the desired isoperimetric inequality and assertion \ref{sobolev}
follows as indicated above.
\end{proof}

We also need the following version of a logarithmic cut-off function argument.
\begin{lemma} \label{logcutoff} Suppose $B_{r}$, a ball in $\mathbb R^n$, is equipped with a metric equivalent to the Euclidean metric. For any $\epsilon$, there are small $\rho$ and a smooth function $\zeta$, which is $1$ for $x$ from $\partial B_{\sqrt{\rho}}$ and $0$ for $x$ in $\partial B_{\rho}$, such that the following holds. For any smooth function $u_1$,  
\begin{align*}
\int_{B_{\sqrt{\rho}}\setminus B_{\rho}}\|\nabla\zeta\|^2u_1^2\ dv &\leq c\epsilon\int_{B_r}(u_1^2+\|\nabla u_1\|^2)\ dv;\\
\int_{B_{\sqrt{\rho}}\setminus B_{\rho}}u_1^2\ dv &\leq c\epsilon\int_{B_r}(u_1^2+\|\nabla u_1\|^2)\ dv.
\end{align*}
Here $c$ is a constant depending on $r$ and bounds on the eigenvalues of the metric with
respect to the euclidean metric. 

\end{lemma}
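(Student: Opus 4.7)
The plan is to take $\zeta$ to be the standard logarithmic cutoff. Set
\[
\zeta(x) = \frac{\log(|x|/\rho)}{\log(\sqrt{\rho}/\rho)}
\]
for $\rho\le |x|\le\sqrt{\rho}$, extended by $0$ inside $B_\rho$ and by $1$ outside $B_{\sqrt{\rho}}$, mollified slightly near the endpoints to ensure smoothness. Because the metric on $B_r$ is uniformly equivalent to the Euclidean one, all gradient and volume computations may be carried out in the Euclidean metric with the equivalence constants absorbed into $c$. A direct polar-coordinate calculation gives the pointwise bound $|\nabla\zeta(x)|^2 \leq c/(|x|^2 \log^2\rho)$ on the annulus $A:=B_{\sqrt{\rho}}\setminus B_\rho$, whose volume satisfies $|A|\leq c\rho^{n/2}$.

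For the second estimate I would apply H\"older's inequality with exponents $n/(n-2)$ and $n/2$, followed by the Sobolev embedding $H^1(B_r)\hookrightarrow L^{2n/(n-2)}(B_r)$ (valid for $n\geq 3$; in $n=2$ the slack in $H^1\hookrightarrow L^p$ for any finite $p$ plays the same role). This yields
\[
\int_A u_1^2\,dv \leq |A|^{2/n}\|u_1\|_{L^{2n/(n-2)}(A)}^2 \leq c|A|^{2/n}\|u_1\|_{H^1(B_r)}^2,
\]
and $|A|^{2/n}\leq c\rho\to 0$ as $\rho\to 0$, so the estimate holds with $c\epsilon$ once $\rho<\epsilon/c$.

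For the first estimate I substitute the pointwise bound on $|\nabla\zeta|^2$ and apply the same H\"older-Sobolev chain against the weight $|x|^{-2}$:
\[
\int_A |\nabla\zeta|^2 u_1^2\,dv \leq \frac{c}{\log^2\rho}\|u_1\|_{L^{2n/(n-2)}(A)}^2\left(\int_A |x|^{-n}\,dv\right)^{2/n}.
\]
A polar-coordinate computation gives $\int_A |x|^{-n}\,dv = c_n\int_\rho^{\sqrt\rho} r^{-1}\,dr = c_n|\log\rho|/2$, while Sobolev bounds the first factor by $c\|u_1\|_{H^1(B_r)}^2$. Combining the pieces gives
\[
\int_A |\nabla\zeta|^2 u_1^2\,dv \leq \frac{c}{|\log\rho|^{2-2/n}}\|u_1\|_{H^1(B_r)}^2,
\]
and since $2-2/n\geq 4/3>0$ for $n\geq 3$, this prefactor tends to $0$ as $\rho\to 0$; choosing $\rho$ small enough gives the desired bound by $c\epsilon\|u_1\|_{H^1(B_r)}^2$.

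The main obstacle is the low-dimensional case: for $n=2$ the Sobolev embedding into $L^\infty$ fails, and a truncated logarithm $u_1(x)=\min(M,\log(R/|x|))$ (with $M$ coupled to $\rho$) shows that the ratio $\int_A |\nabla\zeta|^2 u_1^2\,dv/\|u_1\|_{H^1(B_r)}^2$ stays bounded below by a positive constant for the standard log cutoff, no matter how small $\rho$ is. This presumably accounts for the paper's additional structural hypotheses embodied in the classes $\mathcal{M}_0$ and $\mathcal{M}_1$ in dimensions $2$ and $3$, which supply the control on $u_1$ not available from $H^1$ alone.
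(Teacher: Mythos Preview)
For $n\geq 3$ your argument is essentially the paper's: the same logarithmic cutoff $\zeta(r)=\log(r/\rho)/\log(1/\sqrt\rho)$, the same H\"older--Sobolev chain, and the same decay exponent $|\log\rho|^{2/n-2}$ on the gradient term. The paper streamlines slightly by observing that $|\nabla\zeta|>1$ on the annulus for small $\rho$, so the second displayed inequality follows immediately from the first; your separate H\"older--volume bound for $\int_A u_1^2$ is also correct but not needed.

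Where you part ways is $n=2$. The paper \emph{does} supply an argument there: write
\[
\int_A |\nabla\zeta|^2 u_1^2\,dv = c|\log\rho|^{-2}\int_A |x|^{-2}u_1^2\,dv,
\]
pass to cylinder coordinates $t=\log|x|$ (so that $|x|^{-2}\,dx^1dx^2 = dt\,d\theta$), cut $u_1$ off near the outer edge $t=\log r$ by a fixed-profile $\psi$, and apply the Poincar\'e inequality on the cylinder $[\log\rho,\log r]\times S^1$ with Dirichlet condition at $t=\log r$; conformal invariance of the two-dimensional Dirichlet integral returns the right-hand side to $\int_{B_r}|\nabla(\psi u_1)|^2\,dx$. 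The paper asserts a Poincar\'e constant of order $|\log\rho|$ and hence a net bound $c|\log\rho|^{-1}\|u_1\|_{H^1}^2$. However, the first mixed Dirichlet--Neumann eigenvalue on a cylinder of length $L$ is $(\pi/2L)^2$, so the Poincar\'e constant is of order $L^2\sim|\log\rho|^2$, not $|\log\rho|$; as written that step yields only a uniform bound. Your truncated-logarithm example $u_1=\min(M,\log(r/|x|))$ with $M\sim\tfrac12|\log\rho|$ confirms this: one finds $\|u_1\|_{H^1(B_r)}^2\sim 2\pi M$ while $\int_A|\nabla\zeta|^2u_1^2\sim 4\pi M^2/|\log\rho|\sim 2\pi M$, so the ratio does not tend to zero. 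Your skepticism about the first inequality in dimension two is therefore well founded.

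One correction to your closing remark: the classes $\mathcal M_0$ and $\mathcal M_1$ are not introduced to salvage this lemma. They enter only in Section~7, in the construction of test functions for the \emph{membership} direction, where one needs smallness of $\int|\nabla\nabla\zeta|^2$ (second derivatives of the cutoff), and that already fails for $n\leq 3$. The present lemma, by contrast, is invoked by the paper in all dimensions in Sections~4 and~5.
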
 
\begin{proof}
Let $A=B_{\sqrt{\rho}}\setminus B_{\rho}$ and we set
\[ \zeta(r)=\frac{\log(r/\rho)}{\log(1/\sqrt{\rho})}\ \mbox{for}\ \rho\leq r\leq \sqrt{\rho}.
\]
Note that the function $\zeta$ we have chosen is not smooth but only Lipschitz continuous.
It is a standard argument to see that such a $\zeta$ can be approximated by
smooth functions in the $W^{1,2}$ norm so that we can justify this choice. Also, it suffices to prove the first inequality because, for our choice of $\zeta$,
$1<|\nabla \zeta|$ on $A$.\\

We first consider $n\geq 3$. In this
case we use the H\"older inequality to obtain
\[ \int_{A}\|\nabla\zeta\|^2u_1^2\ dv\leq (\int_{A}
\|\nabla\zeta\|^n\ dv)^{2/n}(\int_{A}u_1^{2n/(n-2)}\ dv)^{(n-2)/n}.
\]
From the definition of $\zeta$ and the conditions on the metric on the annulus we have
\[ \int_{A}\|\nabla\zeta\|^n\ dv\leq c|\log(\rho)|^{-n}\int_{\rho}^{\sqrt{\rho}}
r^{-1}dr\leq c|\log(\rho)|^{1-n}.
\]
Thus for any $\epsilon>0$, when $\rho$ is small enough we have
\[ \int_{B_{\sqrt{\rho}}\setminus B_\rho}\|\nabla\zeta\|^2u_1^2\ dv\leq \epsilon
(\int_{B_{\sqrt{\rho}}\setminus B_\rho}u_1^{2n/(n-2)}\ dv)^{(n-2)/n}.
\]
Now if $\psi$ is a cut-off function, which is $1$ on $B_\rho$ and supported in $B_{r}$, then
we have
\[ (\int_{B_{\sqrt{\rho}}\setminus B_\rho}u_1^{2n/(n-2)}\ dv)^{(n-2)/n}\leq (\int_{B_{r}}(\psi u_1)^{2n/(n-2)}\ dv)^{(n-2)/n}\leq c\int_{B_{r}}\|\nabla (\psi u_1)\|^2\ dv.
\]
Here we have used the Sobolev inequality, Lemma \ref{inequality}(\ref{sobolev}), for functions vanishing on the outer boundary of the annulus $B_{r}\setminus B_\rho$. Since the gradient of $\psi$ is bounded
we obtain
\[ (\int_{B_{\sqrt{\rho}}\setminus B_\rho}u_1^{2n/(n-2)}\ dv)^{(n-2)/n}\leq (\int_{B_{r}}(\psi u_1)^{2n/(n-2)}\ dv)^{(n-2)/n}\leq c\int_{B_r}(u_1^2+\|\nabla u_1\|^2)\ dv.
\]
Combining with our previous inequality we obtain,
\[ \int_{B_{\sqrt{\rho}}\setminus B_{\rho}}\|\nabla\zeta\|^2u_1^2\ dv\leq c\epsilon\int_{B_r}(u_1^2+\|\nabla u_1\|^2)\ dv.
\]

For $n=2$ we can obtain the conclusion in a slightly different way. We make the same
choice of $\zeta$, and we come to the problem of estimating the term
\[  \int_{B_{\sqrt{\rho}}\setminus B_\rho}\|\nabla\zeta\|^2u_1^2\ dv= c|\log(\rho)|^{-2}
\int_{B_{\sqrt{\rho}}\setminus B_\rho}r^{-2}u_1^2\ dv.
\]
We observe that since the metric is near Euclidean in an appropriate annulus $B_{r}\setminus
B_\rho$ where $r$ is a fixed radius, we may do the estimate in the Euclidean metric.
In this case, the volume form $|x|^{-2}dx^1dx^2$ is that of the cylinder $\mathbb R\times \mathbb S^1$
with coordinates $t=\log(|x|)$ and the polar coordinate $\theta$. The annulus now becomes the
cylinder $[\log(\rho),1/2\log(\rho)]\times \mathbb S^1$.
 
Consider the eigenvalue problem with boundary conditions which are Dirichlet at $t=\log(r)$
and Neumann at $t=\log(\rho)$. As the metric is rotationally symmetric, we can use separation of variables to see that the first eigenvalue is inversely proportional to the length of this tube. Choosing $\psi$ to be a cutoff function of $t$ which is $0$ for $t\geq\log(r)$ and $1$ for $t\leq\log(r)-1$, we may
apply the Poincar\'e inequality to obtain,
\[ \int_{A}r^{-2}u_1^2\ dv\leq \int_{B_{r}\setminus B_\rho}
(\psi u_1)^2r^{-2}\ dv\leq c|\log(\rho)|\int_{B_{r}}\|\nabla (\psi u_1)\|^2\ dv.
\]
Here we also use the fact that the Dirichlet integral is conformally invariant. We have chosen $\psi$ so that it has 
bounded derivatives, so we obtain as in the case $n\geq 3$,
\[ \int_{B_{\sqrt{\rho}}\setminus B_{\rho}}\|\nabla\zeta\|^2u_1^2\ dv\leq c|\log(\rho)|^{-1}\int_{B_r}(u_1^2+\|\nabla u_1\|^2)\ dv.
\]

\end{proof}

\section{\textbf {Examples}}
We begin by considering coverings of complete manifolds. Our first result gives conditions
under which a covering $\tilde{M}$ of a complete manifold in class $\mathcal M(\bar{\rho},\bar{\delta},\Lambda)$ also lies in the same class.

\begin{proposition}\label{coverings} Suppose $M\in \mathcal M(\bar{\rho},\bar{\delta},\Lambda)$ is a complete manifold. Let $\tilde{M}$ be a Riemannian covering of $M$ with the property that the 
covering projection $\Pi$ is a diffeomorphism from each connected component of $\Pi^{-1}(X)$ to its image. We then have $\tilde{M}\in \mathcal M(\bar{\rho},\bar{\delta},\Lambda)$.
\end{proposition}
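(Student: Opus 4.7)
The plan is to set $\tilde X = \Pi^{-1}(X)$ and $\tilde N = \Pi^{-1}(N)$, so that $\tilde M = \tilde X \cup \tilde N$ with overlap $\Pi^{-1}(X\cap N)$, and then to verify the four conditions defining $\mathcal M(\bar\rho,\bar\delta,\Lambda)$ in order. For (1), (2), (3), the hypothesis that $\Pi$ restricts to a diffeomorphism on each connected component of $\tilde X$ does almost all the work: because $\Pi$ is a Riemannian covering, this restriction is automatically a local isometry, hence an isometry onto its image. Each component $\hat{\tilde X}$ of $\tilde X$ then inherits from its image $\Pi(\hat{\tilde X})$ the diffeomorphism with some $X_\alpha$ minus a finite union of balls, the description of its boundary as geodesic spheres of radii at most $\bar\rho$, the $C^1(g_\alpha)$-closeness of the two metrics, and the annular structure of the overlap with $\tilde N$. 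This part is routine unpacking of definitions.

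The real content is verifying condition (4), the lower bound $\Lambda$ on the Dirichlet eigenvalue of $\tilde N$. My plan here is a fiberwise pushdown. Given a bounded subdomain $\tilde \Omega\subset \tilde N$ and a test function $\tilde f\in C_c^\infty(\tilde\Omega)$, I set
\[
\phi(x)=\Big(\sum_{y\in\Pi^{-1}(x)}\tilde f(y)^2\Big)^{1/2},
\]
noting that the sum is finite and locally finite because $\tilde f$ has compact support. The function $\phi$ is nonnegative and Lipschitz, supported in the bounded subdomain $\Omega'=\Pi(\tilde\Omega)\subset N$. Since $\Pi$ is a local isometry, the coarea-type identity for coverings yields $\int_N\phi^2\,d\mu_g=\int_{\tilde N}\tilde f^2\,d\mu_{\tilde g}$, and Cauchy--Schwarz applied to the pointwise identity $\phi\,\nabla\phi=\sum_y \tilde f(y)\nabla\tilde f(y)$ gives $\int_N |\nabla\phi|^2\leq\int_{\tilde N}|\nabla\tilde f|^2$. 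Feeding $\phi$ into the hypothesized bound $\lambda_1^D(\Omega')\geq \Lambda$ on $N$ then transfers the Rayleigh inequality to $\tilde f$ on $\tilde\Omega$, as required.

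The main obstacle I anticipate is the very mild one that $\phi$ fails to be smooth on its zero set, so one cannot directly substitute it into a Dirichlet Rayleigh quotient set up for smooth functions. I plan to handle this in the standard way by replacing $\phi$ with the smooth regularization $\phi_\epsilon=(\sum_y\tilde f(y)^2+\epsilon)^{1/2}$, which satisfies the same pointwise gradient bound by Cauchy--Schwarz, and letting $\epsilon\to 0$; alternatively one may note directly that $\phi$ is Lipschitz with compact support in $\Omega'$ and therefore lies in $H_0^1(\Omega')$, so the Dirichlet Rayleigh bound applies verbatim. Beyond this technicality, the argument is entirely mechanical once one has injectivity of $\Pi$ on components of $\tilde X$.
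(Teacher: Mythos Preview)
Your argument is correct and takes a genuinely different route from the paper for the key step, condition (4). The paper does not push test functions down from $\tilde N$ to $N$; instead it invokes the characterization of Fischer--Colbrie--Schoen \cite{FCS80}: the bound $\lambda_1(\hat N)\ge\Lambda$ on a component $\hat N$ of $N$ is equivalent to the existence of a positive function $u$ on $\hat N$ with $\Delta u+\Lambda u\le 0$, and this $u$ lifts verbatim to every component of $\Pi^{-1}(\hat N)$, giving the same lower bound upstairs. Your fiberwise $\ell^2$-pushdown $\phi=(\sum_{y\in\Pi^{-1}(x)}\tilde f(y)^2)^{1/2}$ with the Cauchy--Schwarz gradient bound is a direct Rayleigh-quotient proof that avoids any external citation; it is more elementary and fully self-contained, whereas the paper's proof is shorter but relies on a black-box equivalence. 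Both are standard devices for comparing first eigenvalues under coverings.

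One small remark on your regularization: the function $\phi_\epsilon=(\sum_y\tilde f(y)^2+\epsilon)^{1/2}$ is bounded below by $\sqrt\epsilon$ and hence not compactly supported, so it cannot be fed directly into the Dirichlet quotient; you would need $\phi_\epsilon-\sqrt\epsilon$ (which is nonnegative, compactly supported, and has the same gradient) or a cutoff. Your stated alternative---that $\phi$ is Lipschitz with compact support in the interior of $N$ and therefore lies in $H_0^1$ of any bounded open neighborhood of its support---is the clean way to finish and needs no approximation at all.
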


\begin{remark} The condition on the covering $\tilde{M}$ is automatic if $X$ is simply connected.
In general if the covering $\tilde{M}$ corresponds to a subgroup $\Gamma$ of $\pi_1(M)$, the
condition follows from the condition that $\pi_1(\hat{X})\subseteq\Gamma$ for each component
$\hat{X}$ of $X$ where the base point is understood to lie in $\hat{X}$.
\end{remark}

\begin{proof} To show that $\tilde{M}\in \mathcal M(\bar{\rho},\bar{\delta},\Lambda)$ we let
$\tilde{X}=\Pi^{-1}(X)$ and observe that the condition that $\Pi$ is a diffeomorphism (hence
an isometry) on each component of $\tilde{X}$ implies the conditions (1) and (2) for
$\tilde{X}$. We let $\tilde{N}=\Pi^{-1}(N)$, and we will show that 
$\lambda_1(\tilde{N})\geq\lambda_1(N)$.
For each component $\hat{N}$ of $N$ we have $\lambda_1(\hat{N})\geq\lambda_1(N)$
since $\lambda_1(N)$ is the infimum of $\lambda_1$ over its components. By 
\cite{FCS80} it follows that this condition is equivalent to the existence of a positive solution $u$
on $\hat{N}$
of the differential inequality $\Delta u+\lambda_1(\hat{N})u\leq 0$. Lifting $u$ to any
component of $\Pi^{-1}(\hat{N})$ shows that each such component has first Dirichlet eigenvalue
at least $\lambda_1(N)$ as required. We have thus verified all four of the properties and
have shown that $\tilde{M}\in \mathcal M(\bar{\rho},\bar{\delta},\Lambda)$.
\end{proof}

\begin{remark}\label{coveringsm0}
It is clear that a similar statement also holds for $\mathcal M_0(\bar{\rho},\bar{\delta},\Lambda)$ but not $\mathcal M_1(\bar{\rho},\bar{\delta},\Lambda)$. The problem is that a connected component of the pre-image of a component of $N$ might cover many (or infinite) times. The statement holds then if there is a finite uniform bound for the number of pre-images of a point in any component of $\Pi^{-1}(N)$.  
\end{remark}
 
Now we show how to construct a metric on a non-compact covering which lies in our class. Our model will be a standard unit sphere. Recall that the metric on the unit $n$-sphere can be written as,
\[\frac{4\epsilon^2}{(\epsilon^2+ r^2)^2}(dr^2+r^2 d_{\mathbb S^{n-1}}),\]
where $dr^2+r^2 d_{\mathbb S^{n-1}}$ is just the Euclidean metric. 
\begin{figure}[h]
\includegraphics[width=12cm]{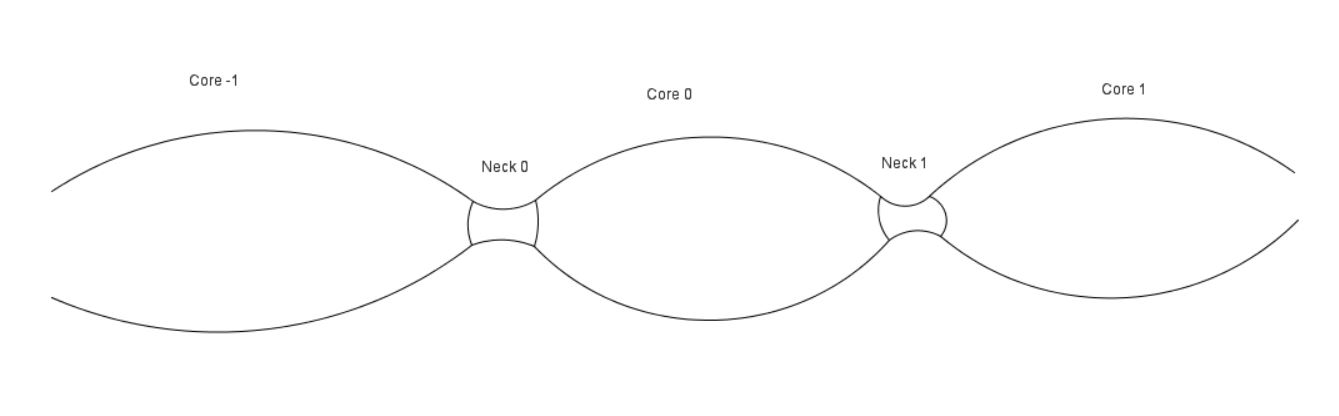}
\caption{Surface with small necks}
\label{surface1}
\end{figure}
\begin{proposition}\label{covering_gaps} We fix some arbitrarily small $\bar{\rho}$ and arbitrarily large $\Lambda$. Given any compact manifold $M$, any noncompact 
covering manifold
$\tilde{M}$ of $M$, there is a metric $g$ on $M$ so that $(\tilde{M},\tilde{g})\in \mathcal M(\bar{\rho},0,\Lambda)$. In fact, we also have $(\tilde{M},\tilde{g})\in  \mathcal M_0(\bar{\rho},0,\Lambda)$.
\end{proposition}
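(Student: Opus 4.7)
The plan is to build the metric $g$ on $M$ by fixing any background metric $g_0$ and modifying it locally inside a single small coordinate ball around a chosen point $p\in M$. The rough picture: outside the ball, $g$ equals a very small rescaling $\epsilon^2 g_0$; inside the ball, $g$ smoothly transitions through a short thin tube into an isometric copy of the unit round sphere $(\mathbb S^n,g_{\mathrm{round}})$ with one geodesic ball of radius $\rho\le\bar\rho$ removed. The sphere piece is the only core component, while everything else---the rescaled $M$, the tube, and a small collar annulus inside the bubble---forms the neck region $N$. The key design choice is that the single core is simply connected, which is what will let the construction survive passage to an arbitrary cover.

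Concretely, take $\mathcal X=\{\mathbb S^n\}$ and choose $g_0$ to be flat in a small coordinate ball $B_{r_0}(p)$; such a metric always exists on $M$ by a partition-of-unity interpolation between any Riemannian metric and the Euclidean one. Inside $B_{r_0}(p)$, prescribe $g$ as a conformally flat radial metric $u(r)^{4/(n-2)}(dr^2+r^2 d_{\mathbb S^{n-1}})$ (with the analogous $u^2$-factor when $n=2$), where $u(r)$ is a positive smooth function chosen so that: in an inner interval $u(r)=\bigl(\tfrac{2\mu}{\mu^2+r^2}\bigr)^{(n-2)/2}$ for a suitable small $\mu>0$, which reproduces an isometric piece of the unit sphere via the formula $\tfrac{4\mu^2}{(\mu^2+r^2)^2}(dr^2+r^2 d_{\mathbb S^{n-1}})$ recalled in the paper; in an intermediate interval $u$ is a very small positive constant, yielding a thin cylindrical tube; and near $r=r_0$, $u$ is the constant $\epsilon^{(n-2)/2}$, so the metric there is $\epsilon^2 g_{\mathrm{Eucl}}$, joining smoothly to $\epsilon^2 g_0$ on $M\setminus B_{r_0}(p)$. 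Such a $u$ is produced by standard bump-function interpolation.

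With this construction, the single core component $\hat X_1$ is exactly isometric to $\mathbb S^n$ with a geodesic ball of radius $\rho$ removed (so $\bar\delta=0$), and the overlap annulus $B_{2\rho}(q)\setminus B_\rho(q)\subset\hat X_1$ is the standard spherical annulus. The neck is $N=M\setminus(\mathbb S^n\setminus B_{2\rho}(q))$, and its total volume is bounded by $O(\epsilon^n\,\mathrm{vol}(M,g_0)+\rho^n)$; the Faber--Krahn inequality, valid on manifolds of bounded geometry, then gives $\lambda_1(N)\ge c/\mathrm{vol}(N)^{2/n}$, which is $\ge\Lambda$ once $\epsilon$ and $\rho$ are taken small enough. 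This verifies $(M,g)\in\mathcal M(\bar\rho,0,\Lambda)$. The $\mathcal M_0$ condition holds with $\mathcal S_0=\{k(k+n-1):k\ge 1\}$: each eigenspace of the round sphere has dimension at least $n+1\ge 2$, and by homogeneity contains a nonzero eigenfunction vanishing at the single removed-ball center $q$.

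Finally, to pass to the cover, apply Proposition~\ref{coverings} and Remark~\ref{coveringsm0}. The hypothesis that the covering projection be a diffeomorphism on each connected component of $\Pi^{-1}(X)$ is automatic here, because $\hat X_1\cong D^n$ is simply connected, so $\pi_1(\hat X_1)=0$ is contained in every deck subgroup $\Gamma\subseteq\pi_1(M)$. Therefore $(\tilde M,\tilde g)\in\mathcal M(\bar\rho,0,\Lambda)$, and in fact $(\tilde M,\tilde g)\in\mathcal M_0(\bar\rho,0,\Lambda)$. The main technical obstacle is the smooth construction of the conformal factor $u$ patching the three regimes (round sphere, thin neck, rescaled Euclidean) while keeping the core exactly round, together with the quantitative scaling argument giving $\lambda_1(N)\ge\Lambda$; both are handled by routine interpolation and volume/diameter estimates, and the simple-connectivity of the core is what removes the need for any hypothesis on $\tilde M$ beyond being a cover.
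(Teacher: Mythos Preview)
Your overall strategy coincides with the paper's: take $\mathcal X=\{\mathbb S^n\}$, use a conformal factor on a Euclidean coordinate ball to produce one exactly-round spherical core while shrinking the rest of $M$ by a factor~$\epsilon$, and then pass to the cover via Proposition~\ref{coverings} using that the disk-shaped core is simply connected. The $\mathcal M_0$ verification through multiplicity of spherical harmonics is likewise the paper's argument.

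There is, however, a real gap at the neck-eigenvalue step. You appeal to a Faber--Krahn inequality ``valid on manifolds of bounded geometry'' to get $\lambda_1(N)\ge c\,\mathrm{vol}(N)^{-2/n}$ and then send $\epsilon\to 0$. But $(N,g)$ does \emph{not} have geometry bounded uniformly in~$\epsilon$: on the piece $(M\setminus B_{r_0}(p),\epsilon^2 g_0)$ the curvature is $\epsilon^{-2}$ times that of $g_0$, so any Faber--Krahn constant coming from curvature or injectivity-radius bounds depends on~$\epsilon$. Without tracking that dependence you cannot conclude $c\,\mathrm{vol}(N)^{-2/n}\to\infty$. (A long thin flat cylinder already shows there is no universal Faber--Krahn constant on bounded-geometry manifolds.) The argument can be salvaged by carrying the curvature-dependent comparison through, but as written it is incomplete.

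The paper bypasses this with a direct Rayleigh-quotient comparison. Since on all of $N$ the conformal factor obeys $c_1\epsilon\le u\le c_2\epsilon$, for any $f$ vanishing on $\partial N$,
\[
\int_N f^2\,dv_g\le (c_2\epsilon)^n\int_N f^2\,dv_{g_0}\le \frac{(c_2\epsilon)^n}{\lambda_1(N,g_0)}\int_N\|\nabla_{g_0}f\|^2\,dv_{g_0}\le \frac{c_2^n}{c_1^{n-2}}\,\frac{\epsilon^2}{\lambda_1(N,g_0)}\int_N\|\nabla_g f\|^2\,dv_g,
\]
hence $\lambda_1(N,g)\ge c\,\lambda_1(N,g_0)\,\epsilon^{-2}$, where $\lambda_1(N,g_0)>0$ is a fixed number since $N$ is a proper subdomain of the compact $(M,g_0)$. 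This also makes your extra ``thin tube'' unnecessary: the paper just takes $u=2(\epsilon+\epsilon^{-1}|x|^2)^{-1}$ on the unit ball and the constant $2(\epsilon+\epsilon^{-1})^{-1}$ elsewhere, which already gives $u\asymp\epsilon$ throughout~$N$.
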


\begin{proof} We begin with a metric $g_0$ which contains an isometric
 copy of the unit ball in $\mathbb R^n$. Thus we can find local coordinates so that for
 $|x|\leq 1$ the metric $g_0$ is the euclidean metric. We now define $g$ to be a metric
 conformal to $g_0$ of the form $g=u^2g_0$ where we take $u$ to be a smooth approximation
 to the function which is, 
 $\begin{cases} &=2(\epsilon+\epsilon^{-1}|x|^2)^{-1} \mbox{ for } |x|\leq 1,\\
 &=2(\epsilon+\epsilon^{-1})^{-1} \mbox{ otherwise. }
 \end{cases}$ 
 
 Here $\epsilon>0$
 is a number which we will choose small. The metric we have chosen is isometric to a large 
 portion of the unit sphere $\mathbb S^n$; in fact, the radius in $\mathbb S^n$ of a Euclidean sphere
 $|x|=r\leq 1$ is given by $\sin^{-1}(ru)=\sin^{-1}(2r(\epsilon+\epsilon^{-1}r^2)^{-1})$. 
 
Now we set, for $|x|$ the Euclidean distance,
\begin{itemize}
\item $\rho=\sin^{-1}(2(\epsilon+\epsilon^{-1})^{-1})$ (for $\epsilon$ small, $\rho\approx 2\epsilon$),
\item $r_0$ is a number such that the sphere $|x|=r_0$ has radius $2\rho$ in $\mathbb S^n$ ($r_0\approx 1/2$),
\item $X=\{|x|\leq r_0\}$,
\item $N=M\setminus \{|x|\leq r_0\}$.
\end{itemize}

Let $\lambda=\lambda_1(N,g_0)>0$. On $N$ we observe that $c_1\epsilon\leq u\leq c_2\epsilon$ where $c_1$ and $c_2$
 are positive constants. Therefore, for any function $f$ vanishing on $\partial N$
 we have
 \[ \int_Nf^2\ dv_g\leq (c_2\epsilon)^n\int_Nf^2\ dv_{g_0}\leq (c_2\epsilon)^n\lambda^{-1}
 \int_N\|\nabla_{g_0}f\|^2\ dv_{g_0}\leq \frac{c_2^n}{c_1^{n-2}}\epsilon^2\lambda^{-1}
 \int_N\|\nabla_gf\|^2\ dv_g.
 \]
 It follows that $\lambda_1(N,g)\geq c\lambda\epsilon^{-2}$ for some positive constant $c$.
 Thus by choosing $\epsilon$ small we have $(M,g)\in\mathcal M(\bar{\rho},0,\Lambda)$
 for $\bar{\rho}$ as small as we wish and $\Lambda$ as large as we wish where our model
 manifold is the standard unit $n$-sphere. By Proposition \ref{coverings} the lifted
 metric on $\tilde{M}$ is also in class $\mathcal M(\bar{\rho},0,\Lambda)$. The last statement follows because our model is homogeneous and, thus, Remark \ref{coveringsm0} applies.  
\end{proof}

We now make a similar construction for any complete manifold with bounded geometry;
that is, bounded curvature and positive injectivity radius. That is we remove the condition
that our complete manifold cover a compact manifold.

\begin{proposition} \label{complete}
We fix some arbitrarily small $\bar{\rho}$ and arbitrarily large $\Lambda$.
Let $(M,g_0)$ be a complete noncompact Riemannian manifold of bounded curvature and positive injectivity radius. There is a metric $g$ on $M$ such that $(M,g)$ has bounded curvature and positive injectivity radius,
the eigenvalues of $g$ with respect to $g_0$ are bounded above and below by positive 
constants, and $({M},{g})\in \mathcal M(\bar{\rho},0,\Lambda)$. In fact, we also have $({M},{g})\in  \mathcal M_0(\bar{\rho},0,\Lambda)$.
\end{proposition}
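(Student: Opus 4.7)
The plan is to adapt the conformal-blowup construction of Proposition \ref{covering_gaps} to the non-compact setting, by using the bounded geometry of $(M, g_0)$ to perform simultaneous local modifications at a uniformly distributed family of points in $M$. The goal is to produce a metric $g$ on $M$ that decomposes the manifold into densely packed spherical-cap cores connected through thin necks, with each neck component admitting a large Dirichlet eigenvalue bound.

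First, by bounded curvature and positive injectivity radius, I will select a uniformly discrete, uniformly dense net $\{p_i\} \subset M$ together with a uniform combinatorial structure on $M$ (for instance a Voronoi decomposition or a uniform triangulation), in which each cell $V_i$ is topologically a ball of uniformly bounded diameter and shares faces with a uniformly bounded number of neighbors. In normal coordinates at each $p_i$, $g_0$ will be uniformly close to the Euclidean metric in the norm required by the surgery. Then I will construct $g$ cell by cell: within each cell $V_i$, perform a surgery-interpolation (first interpolating $g_0$ to the Euclidean metric inside a smaller concentric ball via an annular transition, then applying the spherical conformal factor from Proposition \ref{covering_gaps}) so as to replace the metric on the interior of $V_i$ by one isometric to the round $S^n$ with a small number of geodesic balls of radius $\bar\rho$ removed, one per face of $V_i$; along each shared face I will connect the two spherical cores by a thin cylindrical tube. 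The metric $g$ so obtained is smooth, globally bi-Lipschitz equivalent to $g_0$ with ratios depending only on the fixed surgery scale, and has bounded curvature and positive injectivity radius since the model pieces and their gluings are uniformly controlled.

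Membership in $\mathcal M(\bar\rho, 0, \Lambda)$ then follows with $\mathcal X$ taken to be (finitely many marked copies of) $S^n$: each core component $\hat X_i$ is \emph{exactly} isometric to $S^n$ with a bounded number of balls of radius $\bar\rho$ removed, giving conditions (1)--(3) of the definition of $\mathcal M$ with $\bar\delta = 0$; each connected component of the neck is a thin cylindrical tube whose first Dirichlet eigenvalue exceeds $\Lambda$ once the tube diameter is chosen sufficiently small, giving (4). The $\mathcal M_0$ property will follow from the homogeneity of $S^n$: for any fixed finite configuration of removed-ball centers, infinitely many eigenvalues of $S^n$ admit spherical-harmonic eigenfunctions vanishing simultaneously at all those centers (high-degree harmonics provide the required dimension counting). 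The hardest step will be the surgery-interpolation itself --- producing a smooth $g$ whose cores are \emph{exactly} spherical pieces (so $\bar\delta = 0$), keeping $g$ globally bi-Lipschitz to $g_0$, and arranging the cell decomposition so the resulting union of spherical cores and cylindrical necks is diffeomorphic to $M$; the uniform $C^k$-closeness of $g_0$ to the Euclidean metric in coordinate charts, which is precisely what bounded curvature and positive injectivity radius provide, is the essential tool making all of this feasible uniformly over $M$.
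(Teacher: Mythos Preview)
Your proposal contains a genuine topological obstruction. You want to equip each Voronoi cell $V_i$ (a topological ball) with a metric making its interior isometric to $S^n$ with $k$ geodesic balls removed, one per face. But $S^n$ minus $k$ balls has $k$ boundary spheres, whereas a ball has a single connected boundary; for $k>1$ these are not even diffeomorphic, so no such metric exists on $V_i$. If instead you genuinely excise the cells and glue in $k$-holed spheres joined by tubes, the resulting manifold is no longer diffeomorphic to $M$ (already in dimension $2$, replacing the cells of a hexagonal grid by $6$-holed spheres and tubes produces an infinite-genus surface). You flag ``arranging the cell decomposition so the resulting union \ldots\ is diffeomorphic to $M$'' as the hardest step; in fact it cannot be carried out as you describe.

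The paper avoids this entirely by abandoning the tube picture for the neck. Each core component is $S^n$ with a \emph{single} ball removed (this \emph{is} a ball, so the metric modification is purely local inside a geodesic ball $B_1(p_j)$ and does nothing to the topology of $M$), and the neck $N$ is simply $M$ minus a union of slightly smaller concentric balls $B_{r_0}(p_j)$. The neck therefore inherits whatever complicated topology $M$ has; the point is that one does not need $N$ to be a union of thin tubes to force $\lambda_1(N)$ large. Instead, one first shows $\lambda_1(N,g_1)>0$ for the intermediate metric $g_1$ by a covering argument: the balls $B_2(p_j)$ cover $M$ with bounded multiplicity (since the $B_1(p_j)$ were chosen as a maximal disjoint family of unit balls), and on each annulus $B_2(p_j)\setminus B_{r_0}(p_j)$ the Poincar\'e inequality of Lemma~\ref{inequality}(\ref{poincare}) holds with a uniform constant. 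Summing and dividing by the multiplicity gives a uniform Poincar\'e inequality on $N$. The spherical conformal factor then scales the eigenvalue as $\lambda_1(N,g)\geq c\,\lambda_1(N,g_1)\,\epsilon^{-2}$, which exceeds $\Lambda$ once $\epsilon$ is small. Since each core is $S^n$ minus a single ball, the $\mathcal M_0$ statement follows immediately from homogeneity of $S^n$, without any dimension-counting for harmonics vanishing at several prescribed points.
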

\begin{figure}[h]
\includegraphics[width=12cm]{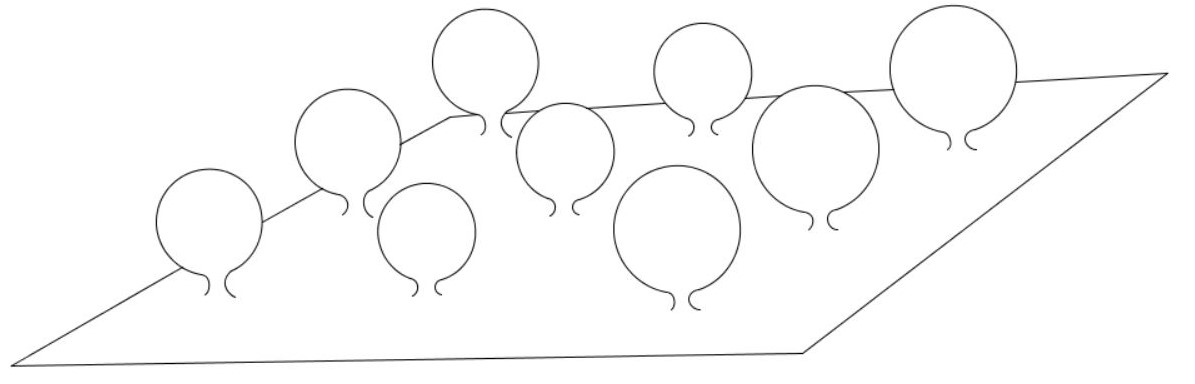}
\caption{Core consists of Sphere-like Bubbles}
\label{completefigure1}
\end{figure}
\begin{proof} Since we are allowed to rescale $g_0$ by a constant, there is no loss of generality
in assuming that the injectivity radius of $g_0$ is at least $2$. We now choose a maximal disjoint
family of balls $B_1(p_j)$, $j=1,2,\ldots$ of radius $1$. Next we deform the metric $g_0$ to
a new metric $g_1$ which has bounded curvature and positive injectivity radius such that the
ball of radius $1$ about each point $p_j$ is Euclidean. To do this we choose normal coordinates
centered at $p_j$ and write $g_0$ in the form
\[ g_0=dr^2+r^2g(r),\ 0\leq r\leq 2
\]
where $g(r)$ is a smooth family of metrics on $\mathbb S^{n-1}$ of bounded curvature with $g(0)$
equal to the standard unit metric. We choose a smooth non-decreasing function $\zeta(r)$
such that
$$\begin{cases}\zeta(r)&=0 \text{ for } 0\leq r\leq 1,\\
 \zeta(r)&=r \text{ for } 3/4\leq r\leq 2\end{cases}.$$
 
  We then define the
metric $g_1$ in $B_2(p_j)$ by setting
\[ g_1=dr^2+r^2g_{\zeta(r)},\ 0\leq r\leq 2.
\]
Since $\zeta$ has bounded second derivatives and each metric $g_r$ has bounded curvature
it follows that the curvature of $g_1$ is bounded. By construction the metric $g_1$ is euclidean
in $B_1(p_j)$ for each $j$. Since the metric $g_1$ is uniformly equivalent to $g_0$, the local
volumes of small balls are bounded below by those of corresponding Euclidean balls and it
follows that the injectivity radius of $g_1$ is bounded from below. 

We now construct $g$ by deforming $g_1$ in $B_1(p_j)$ as in the proof of Proposition \ref{covering_gaps}. Our model manifold is again the unit $\mathbb S^n$, and we take,
\begin{itemize}
\item $\rho=\sin^{-1}(2(\epsilon+\epsilon^{-1})^{-1})$ (for $\epsilon$ small, $\rho\approx 2\epsilon$),
\item $r_0$ is a number such that the sphere $|x|=r_0$ has radius $2\rho$ in $\mathbb S^n$ ($r_0\approx 1/2$),
\item $X$ to be the union of the $B_1(p_j)$,
\item $N=M\setminus \cup_{p_j} B_{r_0}(p_j,g_1)$.
\end{itemize}

In order to show that $(M,g)\in\mathcal M(\bar{\rho},0,\Lambda)$ we must show that the
lowest Dirichlet eigenvalue of $N$ with respect to $g_1$ is positive. If we show this then the
same argument as above shows that $\lambda_1(N,g)\geq c\lambda\epsilon^{-2}$. The idea is that we
have removed enough balls from $M$.

To be precise, for smooth function f in $A_j=B_2(p_j,g_1)\setminus B_{r_0}(p_j,g_1)$ and $f=0$ on the inner boundary, by Lemma \ref{inequality}(\ref{poincare}), we have, 
\[ \int_{A_j}f^2\ dv_1\leq c\int_{A_j}\|\nabla_{g_1}f\|^2\ dv_1.
\]

Since $r_0\approx 1/2$, the constant only depends on the equivalence between $g_1$ and the Euclidean metric.  

It now follows that if we take any smooth function $f$ with bounded support on $N$ which is
zero on $\partial N$, we can extend $f$ to all of $M$ by setting it to $0$ on $M\setminus N$,
and we have from above
\[ \int_{B_2(p_j,g_1)}f^2\ dv_1\leq c\int_{B_2(p_j,g_1)}\|\nabla_{g_1}f\|^2\ dv_1.
\]
Note that the balls $B_2(p_j,g_1)=B_2(p_j,g_0)$ cover $M$ since the collection $B_1(p_j)$
was chosen to be a maximal disjoint collection of unit balls. (A point $q$ of distance more than
$2$ from all of the $p_j$ would have the property that $B_1(q)$ is disjoint from all of the
$B_1(p_j)$). Therefore we have
\[ \int_Nf^2\ dv_1\leq \sum_j\int_{B_2(p_j,g_1)}f^2\ dv_1\leq c\sum_j\int_{B_2(p_j,g_1)}\|\nabla_{g_1}f\|^2\ dv_1.
\]
Now for any point $p$ we let $k(p)$ denote the number of balls $B_2(p_j)$ to which $p$
belongs. We may write the term on the right
\[ \sum_j\int_{B_2(p_j,g_1)}\|\nabla_{g_1}f\|^2\ dv_1=\int_M k(p)\|\nabla_{g_1}f\|^2\ dv_1.
\]
We claim that the function $k(p)$ is uniformly bounded. In fact, if for some $j$, $p\in B_2(p_j)$, then $B_3(p)$ must contain $B_1(p_j)$. Since the unit
balls $B_1(p_j)$ are disjoint there can only be a bounded number of such balls by volume
considerations $Vol(B_3(p)\leq cVol(B_1(p_j)$ for each $j$. (Note that metrics $g_0$
and hence $g_1$ are uniformly bounded in terms of the euclidean metric on $B_2(p)$ for any
$p$.) Therefore it follows that 
\[  \int_Nf^2\ dv_1\leq \sum_j\int_{B_2(p_j,g_1)}f^2\ dv_1\leq c\int_N\|\nabla_{g_1}f\|^2\ dv_1
\]
for any smooth function on $N$ with bounded support vanishing on $\partial N$. This shows
that $\lambda_1(N)>0$ and completes the proof that $({M},{g})\in \mathcal M(\bar{\rho},0,\Lambda)$. For the last statement, we apply Remark \ref{coveringsm0} again.

\end{proof}

\section{\textbf{The Approximate Eigenspace}}

We fix a number $\lambda\in\mathbb R\setminus\mathcal S$ and let $d\leq \text{dist}(\lambda,\mathcal S)$. In this section we construct a closed subspace $E_0$ of $L^2(M)$ consisting of smooth
functions whose restriction to a connected component $\hat{X}_\alpha$ of $X$ is an
approximate union of the Neumann eigenspaces for eigenvalues less than $\lambda$.
We will need the following properties of $E_0$.
\begin{prop}\label{approx_espace} Assume that $\bar{\rho}$ and $\bar{\delta}$ are chosen sufficiently
small and $M\in\mathcal M(\bar{\rho},\bar{\delta},\Lambda)$. For $u_0\in E_0$ with compact support we have
\[ \int_Mu_0^2\leq c\int_M(\lambda u_0^2-\|\nabla u_0\|^2)\ dv.
\]

For a smooth function $u_1\in E_0^\perp$ with compact support we have
\[ \int_Xu_1^2\leq c\int_X(\|\nabla u_1\|^2-\lambda u_1^2)\ dv.
\]

Each constant here only depends on $d$ and the geometry of $\mathcal X$. In particular, it depends on the equivalence of each metric $g_\alpha$ with the Euclidean metric and estimates on eigenfunctions on $X_\alpha$ with eigenvalues less than $\lambda$. Consequently, the estimates hold for $\lambda$ in a compact interval disjoint from $\mathcal S$.  \\
\end{prop}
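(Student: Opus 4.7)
The plan is to construct $E_0$ explicitly from Neumann eigenfunctions on each core component, modified by logarithmic cutoffs, and then to reduce both estimates to the Neumann spectral gap on the fixed models $(X_\alpha, g_\alpha)$.

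For each component $\hat X$ of $X$ identified with $X_\alpha$ minus geodesic balls $B_{\rho_i}(p_i)$, I would enumerate the Neumann eigenfunctions $\phi_k = \phi_{\alpha,k}$ of $(X_\alpha, g_\alpha)$ with eigenvalues $\mu_k < \lambda$ (only finitely many), multiply each by a logarithmic cutoff $\zeta$ from Lemma \ref{logcutoff} that vanishes on $\partial B_{\rho_i}(p_i) = \partial \hat X$ and equals $1$ outside $B_{\sqrt{\rho_i}}(p_i)$, and extend the resulting $\tilde\phi_k = \zeta \phi_k$ by zero to $M$. The space $E_0$ would then be defined as the closed $L^2(M,g)$ span of all these $\tilde\phi_k$. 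The first step is to verify that these approximate eigenfunctions are approximately orthonormal in $L^2(M,g)$ with approximate Dirichlet energy $\mu_k$; the error sources are the $C^1$ discrepancy between $g$ and $g_\alpha$ (bounded by $\bar\delta$), the small volume of the removed balls (radii $\leq \bar\rho$), and cross terms involving $\nabla\zeta$ which Lemma \ref{logcutoff} makes arbitrarily small. The first inequality then follows by expanding $u_0 = \sum c_k \tilde\phi_k$, observing $\int u_0^2 \approx \sum c_k^2$ and $\int \|\nabla u_0\|^2 \approx \sum \mu_k c_k^2$, and using $\lambda - \mu_k \geq d$.

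For the second inequality I would argue component by component. On each $\hat X$, define $\bar u_1 = \zeta u_1$ extended by zero into the removed balls; since $\zeta$ vanishes on $\partial\hat X$ this gives a $W^{1,2}$ function on $X_\alpha$. Expanding $\bar u_1 = \sum a_k \phi_k$ in the Neumann basis, the orthogonality of $u_1$ to $\tilde\phi_k$ in $L^2(M,g)$, corrected for the discrepancy between the volume forms $dv_g$ and $dv_{g_\alpha}$, forces $|a_k| \lesssim \bar\delta\, \|u_1\|_{L^2(\hat X)}$ for each $\mu_k < \lambda$. The spectral gap $\mu_{K+1} \geq \lambda + d$ then yields
\[
\int_{X_\alpha}(\|\nabla\bar u_1\|^2 - \lambda \bar u_1^2)\,dv_{g_\alpha} \;\geq\; d \int \bar u_1^2\,dv_{g_\alpha} - C\bar\delta^2 \int_{\hat X} u_1^2\,dv_g.
\]
Transferring this inequality back to $u_1$ produces further error terms of the form $\epsilon \int_{\hat X}(u_1^2 + \|\nabla u_1\|^2)$ coming from the cutoff $\zeta$, again controlled by Lemma \ref{logcutoff} for $\epsilon$ arbitrarily small.

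The hard part will be closing this transfer step. A term of the form $\epsilon \int \|\nabla u_1\|^2$ appears on the right-hand side and must be absorbed into the quantity $\int(\|\nabla u_1\|^2 - \lambda u_1^2)$ on the left; this requires writing $\int \|\nabla u_1\|^2 = \int(\|\nabla u_1\|^2 - \lambda u_1^2) + \lambda\int u_1^2$ and rearranging. Provided $\bar\rho$ and $\bar\delta$ are chosen small in terms of $d$, $\lambda$, and the geometry of $\mathcal X$, all error contributions are strictly smaller than $d/2$ and the desired bound $\int_X u_1^2 \leq c\int_X (\|\nabla u_1\|^2 - \lambda u_1^2)$ follows with $c$ of order $1/d$. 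Uniformity in $\lambda$ over a compact interval disjoint from $\mathcal S$ is automatic because the constants depend only on $d$ and on the finitely many eigenfunctions of the $X_\alpha$ with eigenvalues in a neighborhood of that interval.
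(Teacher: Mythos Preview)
Your proposal is correct and follows essentially the same route as the paper: construct $E_0$ as $\{\zeta v : v \in E_\alpha\}$ with $\zeta$ the logarithmic cutoff, reduce both inequalities to the Neumann spectral gap $d$ on the models $X_\alpha$, and control all cutoff and metric-discrepancy errors via Lemma~\ref{logcutoff} and the pointwise bounds on low eigenfunctions.

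One simplification the paper exploits that you might adopt for the second inequality: rather than expanding $\bar u_1 = \zeta u_1$ in the Neumann basis and bounding the low-mode coefficients by $O(\bar\delta)$, observe that the condition $u_1 \perp \zeta\phi_k$ in $L^2(g)$ is \emph{exactly} the statement that $\zeta\xi u_1 \perp \phi_k$ in $L^2(g_\alpha)$, where $\xi = dv_g/dv_{g_\alpha}$. Applying the variational characterization directly to $\zeta\xi u_1$ then gives the gap inequality with no $\bar\delta^2$ correction term; the $C^1$-smallness of $\xi - 1$ is only needed afterward when stripping $\xi$ off. This avoids tracking the finite-rank remainder and makes the absorption step at the end slightly cleaner, but the outcome is the same.
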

\textbf{Construction:} The space $E_0$ is defined to be the direct sum of finite dimensional spaces $\hat{E}$
of functions supported in a connected component $\hat{X}_\alpha$ of $X$. We let
$E_\alpha$ be the direct sum of the eigenspaces of $X_\alpha$ with eigenvalue less than
$\lambda$. Assume that a boundary component of $\hat{X}_\alpha$ is a sphere of radius
$\rho$ with $\rho\leq\bar{\rho}$. We then let $\zeta$ denote a cutoff function of the geodesic 
distance $r$ on
$\hat{X}_\alpha$ which is $1$ for $r\geq \sqrt{\rho}$ and $0$ for $r\leq \rho$. We then
define $E_0=\{\zeta v:\ v\in E_\alpha\}$. \\

With a careful choice of $\zeta$ we can now
prove the inequalities of Proposition \ref{approx_espace}. 
Let $\xi=(dv)(dv_\alpha)^{-1}$ be the
ratio of the volume forms, and we note that $\xi$ is near one and its derivative is small for small 
$\bar{\delta}$.

Also, since $X_\alpha$ is a compact manifold we have for $v\in E_\alpha$
\[ \sup\{v(x)^2+\|\nabla_\alpha v(x)\|^2:\ x\in X_\alpha\}\leq c\int_{X_\alpha}v^2\ dv_\alpha,
\]
where the constant $c$ depends on $X_\alpha$ and $\lambda$. 
\begin{proof} (\textbf{Prop \ref{approx_espace}})
By the definition of
$E_\alpha$ we have for $v\in E_\alpha$
\[ \int_{X_\alpha}\|\nabla_\alpha v\|^2\ dv_\alpha\leq \lambda_\alpha\int_{X_\alpha}v^2\ dv_\alpha
\]
where $\lambda_\alpha$ is the largest eigenvalue of $X_\alpha$ which is less than $\lambda$.
It follows that
\[ \int_{X_\alpha}v^2\ dv_\alpha\leq c\int_{X_\alpha}(\lambda v^2-\|\nabla_\alpha v\|^2)\ dv_\alpha
\]
where $c=d^{-1}$. Now using the assumption that the metrics
$g_\alpha$ and $g$ are close on $\hat{X}_\alpha$ we have for $u_0=\zeta v$,
\[ \int_{\hat{X}_\alpha}u_0^2\ dv\leq c\int_{\hat{X}_\alpha\setminus B_{\sqrt{\rho}}}(\lambda u_0^2-\|\nabla u_0\|^2)\ dv+c\lambda\int_{\hat{X}_\alpha\cap B_{\sqrt{\rho}}}v^2\ dv_\alpha.
\]
Because of the supremum estimate of $v$, the second term on the right is a small constant
times the term on the left provided $\rho$ is chosen small. Thus we can absorb it back to
the left and remove it from the inequality. We then have
\[ \int_{\hat{X}_\alpha}u_0^2\ dv\leq c\int_{\hat{X}_\alpha}(\lambda u_0^2-\|\nabla_\alpha u_0\|^2)\ dv
+c\int_{B_{\sqrt{\rho}}\setminus B_\rho}\|\nabla (\zeta v)\|^2\ dv.
\]
The second term on the right is bounded by a constant times
\[ \int_{B_{\sqrt{\rho}}\setminus B_\rho}(\|\nabla \zeta\|^2 v^2+\zeta^2\|\nabla v\|^2)\ dv.
\]
By the supremum estimate on $\|\nabla v\|$ and the smallness of the annulus, the second term can be absorbed into the
left. Again from the supremum estimate on $v$ the first term is bounded by
\[ c(\int_{B_{\sqrt{\rho}}\setminus B_\rho}\|\nabla\zeta\|^2\ dv)(\int_{\hat{X}_\alpha}u_0^2\ dv).
\] 
Now by a computation similar (and easier) to Lemma \ref{logcutoff}, the Dirichlet integral of $\zeta$ is small on the annulus if $\rho$ is small.
Thus it can be absorbed to the left and
we have proven the first inequality of Prop. \ref{approx_espace}.\\

To prove the second inequality we let $u_1\in E_0^\perp$ and again we may focus
on a single connected component $\hat{X}_\alpha$. We note first that the condition
that $u_1\in E_0^\perp$ is equivalent to the statement that $\zeta u_1$ is in $E_\alpha^\perp$
with respect to the volume defined by $g$. Consequently, $\zeta\xi u_1$ is orthogonal to $E_\alpha$ with respect to the metric $g_\alpha$.
By the variational characterization of the eigenvalues below $\lambda$ we then have
\[ \int_{\hat{X}_\alpha}(\zeta\xi u_1)^2\ dv_\alpha\leq c\int_{X_\alpha}(\|\nabla_\alpha (\zeta\xi u_1)\|^2
-\lambda(\zeta\xi u_1)^2)\ dv_\alpha.
\]
Here $c=d^{-1}$ again. Using the assumption that $\xi$ is close to $1$ in $C^1$-norm on the support of $\zeta$ and
that the metrics $g_\alpha$ and $g$ are close we readily obtain,
\[ \int_{\hat{X}_\alpha}(\zeta u_1)^2\ dv\leq c\int_{\hat{X}_\alpha}(\|\nabla u_1\|^2-\lambda u_1^2)\ dv
+c\int_{B_{\sqrt{\rho}}\setminus B_\rho}(\|\nabla \zeta\|^2+1)u_1^2\ dv.
\]
Then it follows that,
\[ \int_{\hat{X}_\alpha}u_1^2\ dv\leq c\int_{\hat{X}_\alpha}(\|\nabla u_1\|^2-\lambda u_1^2)\ dv
+c\int_{B_{\sqrt{\rho}}\setminus B_\rho}(\|\nabla \zeta\|^2+2)u_1^2\ dv.
\]
The second term on the right is controlled using Lemma \ref{logcutoff} for $r$ being the mimimum of injectivity radii on our models. Thus, 

\[ \int_{\hat{X}_\alpha}u_1^2\ dv\leq c\int_{\hat{X}_\alpha}(\|\nabla u_1\|^2-\lambda u_1^2)\ dv
+c\epsilon\int_{\hat{X}_\alpha}(u_1^2+\|\nabla u_1\|^2)\ dv.
\]
This clearly implies 
\[ \int_{\hat{X}_\alpha}u_1^2\ dv\leq c\int_{\hat{X}_\alpha}(\|\nabla u_1\|^2-\lambda u_1^2)\ dv.
\]
Summing these inequalities over the components of $X$ then completes the proof of the
second inequality of Proposition \ref{approx_espace}.

\end{proof}
\section{\textbf{Contribution on the Neck Region}}
We now consider the neck region $N$ and prove the following estimate.
\begin{proposition} \label{neckestimate} For any smooth function
$u$ with compact support on $M$ we have the bound,
\begin{equation} \label{neck} \int_Nu^2\ dv\leq \epsilon\int_{M}(u^2+\|\nabla u\|^2)\ dv
\end{equation}
where $\epsilon$ can be made arbitrarily small by choosing $\bar{\rho}, \bar{\delta}$ small
and $\lambda_1(N)$ large.
\end{proposition}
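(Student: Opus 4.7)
The plan is to test against a cutoff $\phi$ supported in $N$ that vanishes on $\partial N$, apply the Dirichlet eigenvalue bound $\lambda_1(N)\ge\Lambda$ to $\phi u$, and control the resulting error terms by means of Lemma~\ref{logcutoff} on each overlap annulus.

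Let $A_j = B_{2\rho_j}\setminus B_{\rho_j}$ denote the components of $X\cap N$. I would construct $\phi$ on $M$ by declaring $\phi\equiv 1$ on $N\setminus\bigcup_j A_j$, $\phi\equiv 0$ on $M\setminus N$, and interpolating radially from $1$ at the inner sphere $\partial B_{\rho_j}$ to $0$ at the outer sphere $\partial B_{2\rho_j}=\partial N$ on each $A_j$. A linear profile in $r$ suffices and yields $\|\nabla\phi\|\le c/\rho_j$ on $A_j$. Since $\phi u$ is compactly supported in $N$ with zero boundary trace, the Dirichlet inequality gives
\[
\int_N\phi^2 u^2\,dv \;\le\; \Lambda^{-1}\int_N\|\nabla(\phi u)\|^2\,dv \;\le\; 2\Lambda^{-1}\int_M\|\nabla u\|^2\,dv + 2c\Lambda^{-1}\sum_j\rho_j^{-2}\int_{A_j}u^2\,dv.
\]

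For the remaining annulus integrals, note that $A_j \subseteq B_{\sqrt{\rho_j}}\setminus B_{\rho_j}$ for $\rho_j$ small, so Lemma~\ref{logcutoff}, applied in coordinates centered at the center $p_j$ of the $j$-th removed ball (with $r$ a fixed positive constant bounded by the injectivity radii of the models), yields
\[
\int_{A_j}u^2\,dv \;\le\; c\epsilon_1\int_{B_r(p_j)}(u^2+\|\nabla u\|^2)\,dv,
\]
where $\epsilon_1\to 0$ as $\bar\rho\to 0$. Summing over $j$ and invoking bounded overlap of the balls $B_r(p_j)$ (which holds since distinct core components are disjoint and each carries only finitely many removed balls) produces $\sum_j\int_{A_j}u^2 \le c\epsilon_1\int_M(u^2+\|\nabla u\|^2)$. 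Writing $\int_N u^2 = \int_N\phi^2 u^2 + \int_N(1-\phi^2)u^2 \le \int_N\phi^2 u^2 + \sum_j\int_{A_j}u^2$ and assembling the estimates gives
\[
\int_N u^2\,dv \;\le\; \bigl[\,2\Lambda^{-1} + (1 + c\Lambda^{-1}\rho^{-2})\,c\epsilon_1\,\bigr]\int_M(u^2+\|\nabla u\|^2)\,dv.
\]
Taking $\bar\rho$ small forces $\epsilon_1$ small; then taking $\Lambda$ large (in particular much larger than $\bar\rho^{-2}$) makes the coefficient arbitrarily small.

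The delicate ingredient is the cross term $\Lambda^{-1}\rho^{-2}\epsilon_1$: the cutoff gradient on the narrow annulus $[\rho,2\rho]$ is forced to grow like $\rho^{-1}$ (a factor-of-two transition interval offers no logarithmic room, so the trick behind Lemma~\ref{logcutoff} does not apply to $\phi$ itself), whereas the gain $\epsilon_1\sim|\log\rho|^{-c}$ available for $\int_{A_j}u^2$ decays only logarithmically in $\rho$. This is precisely why the hypothesis must permit $\Lambda$ to be taken large rather than only $\bar\rho$ to be taken small---shrinking $\bar\rho$ alone cannot overcome the polynomial blow-up of $\rho^{-2}$.
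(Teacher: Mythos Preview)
Your strategy---cut off by $\phi$ vanishing on $\partial N$, apply the Dirichlet bound on $N$, and control the annulus errors---is exactly the paper's. But there is a quantitative gap. After the Dirichlet step you carry the term $\Lambda^{-1}\sum_j\rho_j^{-2}\int_{A_j}u^2$ and then invoke Lemma~\ref{logcutoff} to get $\int_{A_j}u^2\le c\epsilon_1\int_{B_r(p_j)}(u^2+\|\nabla u\|^2)$ with a single $\epsilon_1=\epsilon_1(\bar\rho)$. The class $\mathcal M(\bar\rho,\bar\delta,\Lambda)$ imposes only the \emph{upper} bound $\rho_j\le\bar\rho$ on the neck radii, so $\sup_j\rho_j^{-2}$ is not controlled by the class parameters; your prescription ``take $\Lambda\gg\bar\rho^{-2}$'' therefore does not close the estimate uniformly over the class. (Letting $\epsilon_1$ depend on $\rho_j$ does not help either, since Lemma~\ref{logcutoff} gives only a logarithmic gain and $\rho_j^{-2}\epsilon_1(\rho_j)\to\infty$ as $\rho_j\to 0$.)

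The repair is to estimate $\int_{A_j}\|\nabla\phi\|^2u^2$ in one step rather than factoring out $\rho_j^{-2}$. For $n\ge 3$, H\"older together with the scale-invariant Sobolev inequality of Lemma~\ref{inequality}(\ref{sobolev}) give
\[\int_{A_j}\|\nabla\phi\|^2u^2\le\Bigl(\int_{A_j}\|\nabla\phi\|^n\,dv\Bigr)^{2/n}\Bigl(\int_{A_j}u^{2n/(n-2)}\,dv\Bigr)^{(n-2)/n}\le c\int_{B_r(p_j)}(u^2+\|\nabla u\|^2)\,dv,\]
since $\int_{A_j}\|\nabla\phi\|^n\sim\rho_j^{-n}\cdot\rho_j^{\,n}$ is an absolute constant independent of $\rho_j$; for $n=2$ one argues via cylinder coordinates as in the proof of Lemma~\ref{logcutoff}. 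This is precisely the content of the Remark following the paper's proof: because the term is already multiplied by $\Lambda^{-1}$, one only needs the \emph{weak} form of Lemma~\ref{logcutoff} with a bounded (not small) constant. With this change your bound becomes $\int_N u^2\le c(\Lambda^{-1}+\epsilon_1)\int_M(u^2+\|\nabla u\|^2)$, and the smallness requirements on $\Lambda^{-1}$ and on $\bar\rho$ decouple. Your separate treatment of $\sum_j\int_{A_j}u^2$ via the second inequality of Lemma~\ref{logcutoff} (to cover the $(1-\phi^2)u^2$ contribution) is correct and in fact makes explicit a step the paper's own proof leaves tacit.
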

\begin{proof}
Recall that we are assuming that the first Dirichlet eigenvalue of $\lambda_1(N)$ is large. This implies that we have the Poincar\'e inequality
\[ \int_Nv^2\ dv\leq \lambda_1(N)^{-1}\int_N\|\nabla v\|^2\ dv
\]
for any smooth function $v$ with compact support and with $v=0$ on $\partial N$. 

We apply the Poincar\'e inequality with $v=\zeta u$ where $\zeta$ is a function
which is $1$ on $N\setminus X$ and cuts off to $0$ on each of the annuli $B_{2\rho}\setminus
B_\rho$; those annuli are the components of $X\cap N$. We thus obtain,
\[ \int_{N\setminus X}u^2\ dv\leq 2\lambda_1(N)^{-1}\int_N(\zeta^2\|\nabla u\|^2+\|\nabla\zeta\|^2u^2)\ dv.
\]
The second term on the right can be controlled by using Lemma \ref{logcutoff} again. Here each annuli is of the form $B_{2\rho}\setminus B_\rho$; by a translation, each could be written as $B_{\sqrt{\rho_1}}\setminus B_{\rho_1}$ for some $\rho_1 \approx\rho^2$ and the proof carries over. 

So \[ \int_N\|\nabla \zeta\|^2u^2\ dv\leq c \epsilon \int_M(u^2+\|\nabla u\|^2)\ dv. 
\]
Combining this with the previous inequality completes the proof. 
\end{proof}
\begin{remark}
Here, thanks to factor $\lambda_1(N)^{-1}$ we actually only need a weaker version of Lemma \ref{logcutoff} where we could replace $c\epsilon$ by $c$. For that purpose, when $n\geq 3$ we can choose a standard cut-off function and apply the H\"older and Sobolev inequality.   
\end{remark}
\begin{remark}
The assumption that $\bar{\delta}$ be small is required only because both the arguments for $n\geq 3$ and for $n=2$ use comparison with the Euclidean metric. The bound on $\lambda$ depends
essentially on the largeness of $\Lambda_1(N)$. 
\end{remark}
\section{\textbf{Non-membership of Essential Spectrum}}
The main theorem of this section says that if we choose any number $\lambda$ which is not in the
set $\mathcal S$, the union of the spectra of the $X_\alpha$, then it will not
be in the spectrum of $M$ provided that $\bar{\rho}$ and $\bar{\delta}$ are chosen small and
$\Lambda$ is large enough. The precise statement is the following.
\begin{theorem}\label{first_main} Let $\lambda$ be any real number which is not in $\mathcal S$ and let $d\leq \text{dist}(\lambda, \mathcal S)$. If $\bar{\rho}$ and $\bar{\delta}$ are chosen small enough (depending on $d$ and the geometry of $\mathcal X$) then for any manifold $M\in\mathcal M(\bar{\rho},\bar{\delta},\Lambda)$, the number $\lambda$ is
not in the spectrum of $M$. Also, as explained in Prop. \ref{approx_espace}, the statement holds for any $\lambda$ in a compact interval disjoint from $\mathcal S$.
\end{theorem}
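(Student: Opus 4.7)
The plan is to establish the operator lower bound
\[ \|(-\Delta - \lambda)u\|_{L^2(M)} \geq c\,\|u\|_{L^2(M)}
\]
for every smooth compactly supported $u$, with $c>0$ depending only on $d$ and the geometry of $\mathcal X$. Since $-\Delta$ is self-adjoint on $L^2(M)$, such a bound forces $\lambda \notin \sigma(-\Delta)$, and the uniformity of $c$ gives the stronger conclusion on any compact subinterval of $\mathbb R\setminus\mathcal S$. The crucial idea is to test $(-\Delta-\lambda)u$ not against $u$ itself (whose Rayleigh form is indefinite near $\lambda$) but against a sign-flipped auxiliary function chosen so that both halves of the resulting inner product are nonnegative.

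Concretely, decompose $u = u_0+u_1$ with $u_0$ the $L^2(M)$-orthogonal projection of $u$ onto the approximate eigenspace $E_0$ of Section 4; compact support of $u$ together with the local finiteness of the sum defining $E_0$ ensures that $u_0$ and $u_1$ are smooth with compact support. Set $v = u_1 - u_0$, so that $\|v\|_{L^2(M)} = \|u\|_{L^2(M)}$ by orthogonality. A short computation in which the off-diagonal pieces cancel algebraically yields
\[ \bigl\langle(-\Delta-\lambda)u,\, v\bigr\rangle_{L^2(M)} = \bigl(\|\nabla u_1\|^2 - \lambda\|u_1\|^2\bigr) + \bigl(\lambda\|u_0\|^2 - \|\nabla u_0\|^2\bigr).
\]
The second parenthesis is bounded below by $c^{-1}\|u_0\|_{L^2(M)}^2$ directly from the first inequality of Proposition \ref{approx_espace}. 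For the first parenthesis, the second inequality of Proposition \ref{approx_espace} controls $\|u_1\|_{L^2(X)}^2$, and Proposition \ref{neckestimate} applied to $u_1$ controls $\|u_1\|_{L^2(N)}^2$ by $\epsilon\bigl(\|u_1\|^2 + \|\nabla u_1\|^2\bigr)$. Writing $\|\nabla u_1\|^2 = (\|\nabla u_1\|^2 - \lambda\|u_1\|^2) + \lambda\|u_1\|^2$, together with a parallel adjustment to pass from $\int_X$ to $\int_M$ in the Rayleigh term at the price of another neck contribution, lets the $\epsilon$-error be absorbed, provided $\epsilon$ is small relative to the constants of Proposition \ref{approx_espace}; this is arranged by choosing $\bar\rho, \bar\delta$ small and $\Lambda$ large.

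After absorption one has $\|u_0\|^2 + \|u_1\|^2 \leq C\bigl\langle(-\Delta-\lambda)u, v\bigr\rangle$ with $C$ uniform in $u$. Cauchy--Schwarz together with $\|v\| = \|u\|$ then yields $\|(-\Delta-\lambda)u\| \geq C^{-1}\|u\|$, completing the argument. I expect the main obstacle to be the absorption bookkeeping: the constants coming from Proposition \ref{approx_espace} depend on $d$ and $\mathcal X$ and must be fixed first, and only then are the geometric parameters $\bar\rho, \bar\delta, \Lambda$ calibrated so that the $\epsilon$ of Proposition \ref{neckestimate} is small enough for every absorbed term to carry coefficient strictly less than $1$, uniformly for $\lambda$ in the prescribed compact subinterval of $\mathbb R\setminus\mathcal S$.
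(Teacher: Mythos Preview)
Your proposal is correct and follows essentially the same strategy as the paper: decompose $u=u_0+u_1$, pair $(-\Delta-\lambda)u$ against the sign-flipped combination $u_1-u_0$ so that the cross terms cancel and both diagonal pieces are coercive by Proposition~\ref{approx_espace}, then use Proposition~\ref{neckestimate} to pass from $X$ to $M$ in the $u_1$ estimate. The paper phrases the pairing via the polarization identity $Q(u_0,u_0)-Q(u_1,u_1)=Q(u_0-u_1,u)$ and applies the neck estimate to $u$ (using $u_1=u$ on $N\setminus X$), which leaves an error $\epsilon\int_M\|\nabla u\|^2$ that it then rewrites as $-\int_M uLu+\lambda\int_M u^2$; your choice to apply the neck estimate directly to $u_1$ lets the error be absorbed into $\|\nabla u_1\|^2-\lambda\|u_1\|^2$ without that extra step, which is a mild simplification but not a different idea.
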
 

\begin{proof}
Consider any function $u$ which is smooth and of compact support on $M$. We now
decompose $u=u_0+u_1$ where $u_0\in E_0$ and $u_1\in E_0^\perp$. Since $E_0$ is a disjoint union of finite dimensional vector spaces of smooth functions, $u_0$ is smooth and so is $u_1=u-u_0$. Also $u_0$ has support on $X$ and, thus $u_1=u$ on $N\setminus X$. \\ 

We then have, by Prop. \ref{approx_espace},  
\begin{align} 
\label{u0} \int_Mu_0^2\ dv &\leq c\int_M(\lambda u_0^2-\|\nabla u_0\|^2)\ dv,\\
\label{u1raw}
\int_Xu_1^2\ dv &\leq c\int_X(\|u_1\|^2-\lambda u_1^2)\ dv.
\end{align}
To get a bound on the $L^2$ norm of $u_1$
on all of $M$ we use (\ref{neck}) as follows,
\[ \int_Mu_1^2\ dv=\int_Xu_1^2\ dv +\int_{N\setminus X} u_1^2\ dv\leq c\int_X(\|u_1\|^2-\lambda u_1^2)\ dv
+\epsilon\int_M(u^2+\|\nabla u\|^2)\ dv,
\]
where we have used the fact that $u_1=u$ in $N\setminus X$. Now we have
\[ \int_X(\|\nabla u_1\|^2-\lambda u_1^2)\ dv\leq \int_M(\|\nabla u_1\|^2-\lambda u_1^2)\ dv+\lambda
\int_{N\setminus X} u^2\ dv.
\]
We can then apply the neck estimate a second time to obtain,
\begin{equation} \label{u1}  \int_Mu_1^2\ dv=\int_Xu_1^2\ dv +\int_Nu_1^2\ dv\leq c\int_M(\|u_1\|^2-\lambda u_1^2)\ dv
+c\epsilon\int_M(u^2+\|\nabla u\|^2)\ dv.
\end{equation}

We now observe that the constants `c' in (\ref{u0}) and (\ref{u1raw}) can be taken to be the same since the left hand sides are positive terms. Thus, the constants multiplying the first term in (\ref{u0}) and (\ref{u1}) are the same. 

We also observe that if we let
$L$ denote the operator $\Delta+\lambda$ and we consider smooth functions $\eta_1$ and
$\eta_2$ of compact support, the quadratic form $Q(\eta_1,\eta_2)=\int_M\eta_1L\eta_2\ dv$
is symmetric. Thus we may add (\ref{u0}) and (\ref{u1}) to obtain
\[ \int_M(u_0^2+u_1^2)\ dv\leq c(Q(u_0,u_0)-Q(u_1,u_1))+c\epsilon\int_M(u^2+\|\nabla u\|^2)\ dv.
\]
Since $Q$ is symmetric we have $Q(u_0,u_0)-Q(u_1,u_1)=Q(u_0-u_1,u_0+u_1)$ and so
\[ \int_M u^2\ dv\leq c\int_M(u_0-u_1)Lu\ dv+c\epsilon\int_M(u^2+\|\nabla u\|^2)\ dv.
\]
We may rewrite the term
\[ \int_M\|\nabla u\|^2\ dv=-\int_MuLu\ dv+\lambda u^2,
\]
so we finally have
\[ \int_M u^2\ dv\leq c\int_M(u_0-u_1)Lu\ dv+c|\int_MuLu|+c\epsilon\int_Mu^2\ dv.
\]
Applying the Schwarz inequality and using $\|u_0-u_1\|^2\leq 2(\|u_0\|^2+\|u_1\|^2)=2\|u\|^2$ yield,
\[ \int_Mu^2\leq c\int_M(Lu)^2\ dv,
\]
and we have shown that $\lambda$ is not in the spectrum of $M$.
\end{proof}

\section{\textbf{Proof of Main Theorems}}

Here we prove the following general theorem. The theorems stated in the introduction are special cases. 
\begin{theorem}\label{second_main} Assume that $M$ is of class 
$\mathcal M(\bar{\rho},\bar{\delta},\Lambda)$ for $n\geq 4$ or of either class $\mathcal M_0(\bar{\rho},\bar{\delta},\Lambda)$ or of class $\mathcal M_1(\bar{\rho},\bar{\delta},\Lambda)$ for $n>1$. Given any integer $G$, there exists $\epsilon>0$
such that if $\bar{\rho}<\epsilon$, $\bar{\delta}<\epsilon$, and $\Lambda>\epsilon^{-1}$
then the spectrum of $M$
has at least $G$ gaps. 
If 
$M$ of class $\mathcal M$ or $\mathcal M_1$, and 
the number of connected components of $X$ is infinite, then the spectrum can be replaced by the essential spectrum. 
If 
$M$ is of class $\mathcal M_0$ then we can replace spectrum by essential spectrum provided that the number of connected components of $X$ modeled on $X_\alpha$ is infinite. Here $X_\alpha$ has an infinite number of eigenvalues with eigenfunctions which
vanish at the centers of all balls that are removed to form connected components of $X$. 
\end{theorem}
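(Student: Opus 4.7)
The plan is to combine Theorem \ref{first_main} (which excludes open intervals from the spectrum) with a direct Weyl-sequence construction (which places the endpoints $\mu_i$ of those intervals into the spectrum). I would enumerate the first $G+1$ elements $\mu_1<\mu_2<\cdots<\mu_{G+1}$ of $\mathcal S$ and choose $d>0$ with $2d<\min_i(\mu_{i+1}-\mu_i)$. Theorem \ref{first_main} applied to each compact interval $[\mu_i+d,\mu_{i+1}-d]$ shows that none of these intervals meets $\sigma(M)$, provided $\bar\rho,\bar\delta$ are small and $\Lambda$ is large in terms of $d$ and $\mathcal X$; this is the source of the parameter $\epsilon$ in the statement.

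The remaining task is to produce, for each $i$, an approximate eigenfunction demonstrating that $\mu=\mu_i\in\sigma(M)$ (or $\sigma_{\mathrm{ess}}(M)$). Since $\mu$ is a Neumann eigenvalue of some $X_\alpha\in\mathcal X$ with eigenfunction $v$, I would pick a component $\hat X$ of $X$ modelled on $X_\alpha$ and define $u=\zeta v$ on $\hat X$ and $u\equiv 0$ elsewhere, where $\zeta$ equals $1$ away from the removed balls and decays smoothly to $0$ near each center. Writing $L=\Delta+\mu$, one computes
\[
Lu \;=\; 2\,\nabla\zeta\cdot\nabla v + v\,\Delta\zeta + E_g(\zeta v),
\]
where $E_g$ collects the errors from replacing $g_\alpha$ by $g$ and is controlled in $L^2$ by $c\,\bar\delta$ times a $C^1$-norm of $v$. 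The Weyl criterion places $\mu$ in $\sigma(M)$ as soon as $\|Lu\|_{L^2}/\|u\|_{L^2}$ is arbitrarily small, which reduces to controlling the two cutoff integrals $\int|\nabla\zeta|^2|\nabla v|^2\,dv$ and $\int(\Delta\zeta)^2 v^2\,dv$ over the annuli where $\zeta$ transitions. Compactness of $X_\alpha$ and boundedness of $\mu$ yield supremum bounds $|v|+|\nabla v|\leq C$; in dimensions $n\geq 4$ a logarithmic cutoff on $B_{\sqrt\rho}\setminus B_\rho$ of the type exploited in Lemma \ref{logcutoff} makes both error integrals tend to zero with $\bar\rho$, so class $\mathcal M$ suffices.

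In dimensions $n=2,3$ the $\int(\Delta\zeta)^2v^2$ integral does not decay for a generic eigenfunction, and the extra hypotheses of $\mathcal M_0$ and $\mathcal M_1$ are tailored precisely to bypass this. For $\mathcal M_0$, I would restrict the gap-generating values $\mu_i$ to $\mathcal S_0$ and choose $v$ to vanish at every removed-ball center; the resulting bound $|v(x)|\lesssim|x|$ near each center contributes a factor $|x|^2\sim\rho$ that tames the offending integral in any dimension. For $\mathcal M_1$, rather than cutting $v$ off on a thin annulus inside $\hat X$, I would extend $v$ across $\hat N$ and perform the cutoff inside $\hat N$ itself, where the bounded diameter and bounded number of boundary components keep derivatives of the cutoff bounded while the volume bound $\mathrm{Vol}(\hat N)\leq\bar\delta$ keeps the resulting error integrals arbitrarily small.

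Finally, to promote $\mu_i\in\sigma(M)$ to $\mu_i\in\sigma_{\mathrm{ess}}(M)$, one uses the hypothesis that $X$ has infinitely many connected components modelled on the relevant $X_\alpha$ (any $X_\alpha$ in classes $\mathcal M$ and $\mathcal M_1$, the distinguished $X_\alpha$ of $\mathcal S_0$ in class $\mathcal M_0$). Placing the approximate eigenfunctions $u_k$ on pairwise disjoint components gives disjoint supports, so $u_k\rightharpoonup 0$ weakly in $L^2$, while the uniform estimates above yield $\|Lu_k\|_{L^2}/\|u_k\|_{L^2}\to 0$; the Weyl criterion for essential spectrum then delivers $\mu_i\in\sigma_{\mathrm{ess}}(M)$. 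The main obstacle in the plan is the low-dimensional cutoff estimate, and in particular the class-$\mathcal M_1$ case, where the cutoff must be arranged in the neck so as to exploit the smallness of $\mathrm{Vol}(\hat N)$ and the boundedness (but not smallness) of $\mathrm{diam}(\hat N)$ simultaneously, while still producing a globally $H^1$ test function.
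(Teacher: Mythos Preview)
Your overall architecture matches the paper exactly: exclude intervals via Theorem~\ref{first_main}, then populate the complementary intervals by Weyl sequences built from cutoff eigenfunctions. Your treatment of the class $\mathcal M$ ($n\ge 4$) and class $\mathcal M_0$ cases is essentially the paper's (Propositions~\ref{exist_gap1} and~\ref{exist_gap2}); the paper uses a standard cutoff for $n\ge 5$ and reserves the logarithmic cutoff for the borderline case, and likewise uses the log cutoff for $n=2$ in the $\mathcal M_0$ argument, but these are exactly the refinements you would discover on writing out the integrals.

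One minor slip: you enumerate $\mu_1<\cdots<\mu_{G+1}$ from $\mathcal S$, but nothing guarantees that the particular $X_\alpha$ carrying $\mu_i$ occurs among the components of $X$ at all (the class is defined ``with respect to $\mathcal X$'', not requiring every model to appear). The paper sidesteps this by first selecting a single $X_\alpha$ that occurs infinitely often and then taking the $\mu_i$ to be the first $G+1$ eigenvalues of that $X_\alpha$.

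The substantive gap is the $\mathcal M_1$ case, which you correctly flag as the obstacle. Your plan ``extend $v$ across $\hat N$ and cut off there'' does not, as stated, produce a function in the domain of $\Delta$: the boundary values of $v$ on $\partial B_\rho$ are not constant, so a naive extension either fails to be $C^2$ at the interface or has uncontrolled second derivatives on the neck (whose geometry beyond diameter and volume is unknown). The paper's fix (Proposition~\ref{exist_gap3}) is a two--step construction. First, apply the $\mathcal M_0$ trick to $w=v-v(p)$ (which vanishes at the center $p$) to replace $v$ by a function $v_1$ that is \emph{constant} equal to $v(p)$ on a ball $B_{\rho_1}$ of fixed radius $\rho_1$, with a controllably small error in $\|Lv_1\|_{L^2}$. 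Second, on the enlarged neck $\hat N_1$ (the original neck together with the balls $B_{\rho_1}$), solve the Dirichlet problem for a harmonic function $h$ with boundary data $v(p)$ on the relevant sphere and $0$ on the others. A comparison with the logarithmic cutoff shows the Dirichlet integral of $h$ is $O(|\log\bar\rho|^{-1})$; interior and boundary elliptic estimates then force all derivatives of $h$ to be small near $\partial\hat N_1$, so $v_1$ and $h$ glue smoothly. On $\hat N_1$ one has $Lu=\lambda h$, and the small volume hypothesis gives $\int_{\hat N_1}(Lu)^2\le \lambda^2\,\mathrm{Vol}(\hat N_1)\sup h^2$, which is small. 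This is the missing mechanism that makes the bounded--diameter and small--volume hypotheses of $\mathcal M_1$ work together.
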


To complete the proof of the existence of gaps in the spectrum we must also show that
if we choose $\lambda_1<\lambda_2$ which lie in different connected components of
$\mathbb R\setminus\mathcal S$ (for $\mathcal M_0(\bar{\rho},\bar{\delta},\Lambda)$ replace by 
$\mathbb R\setminus\mathcal S_0$), 
then the interval $(\lambda_1,\lambda_2)$ has nonempty
intersection with the essential spectrum of $M$. This result together with Theorem \ref{first_main} implies our second main theorem on the existence of arbitrarily many spectral gaps.

We will need a preliminary lemma which characterizes the essential spectrum.
\begin{lemma}(Donnelly, \cite[Prop. 2.2]{donnelly81})\label{interval} An interval $(\lambda-\epsilon, \lambda+\epsilon)$ intersects the essential spectrum if and only if there exists an infinite dimensional vector subspace $G_\epsilon$ of the domain $\mathfrak{D}(\Delta)$ such that for every $f\in G_\epsilon$, we have,
\[||\Delta f+\lambda f||_{L^2(M)}\leq \epsilon ||f||_{L^2(M)}.\]
\end{lemma}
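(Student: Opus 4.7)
The plan is to prove this as a standard consequence of the spectral theorem for the unique self-adjoint extension of $-\Delta$ on $L^2(M)$. Let $\{E_\mu\}_{\mu\in\mathbb R}$ denote its spectral resolution, so that for any $f\in\mathfrak D(\Delta)$ one has the functional-calculus identity
\[ \|\Delta f+\lambda f\|_{L^2}^2=\int_{\mathbb R}(\mu-\lambda)^2\, d\|E_\mu f\|^2. \]
I will combine this with the standard spectral characterization of the essential spectrum: a real number $\lambda_0$ lies in $\sigma_{\mathrm{ess}}(-\Delta)$ if and only if every open neighborhood $U$ of $\lambda_0$ has spectral projection $E(U)$ with infinite dimensional range.

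For the forward direction, assume some $\lambda_0\in\sigma_{\mathrm{ess}}(-\Delta)$ lies in $(\lambda-\epsilon,\lambda+\epsilon)$. Choose $\eta>0$ with $|\lambda-\lambda_0|+\eta\leq\epsilon$ and set $J:=(\lambda_0-\eta,\lambda_0+\eta)$. By the above characterization $E(J)$ has infinite dimensional range; take $G_\epsilon$ to be $\mathrm{range}(E(J))\cap\mathfrak D(\Delta)$, which is still infinite dimensional since $\mathfrak D(\Delta)$ is dense and $E(J)$ commutes with the functional calculus. For every $f\in G_\epsilon$ the measure $d\|E_\mu f\|^2$ is supported in $J$, hence the identity above yields
\[ \|\Delta f+\lambda f\|_{L^2}^2\leq\bigl(|\lambda_0-\lambda|+\eta\bigr)^2\|f\|_{L^2}^2\leq\epsilon^2\|f\|_{L^2}^2, \]
which is exactly the claimed estimate.

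For the reverse direction I argue by contraposition. Suppose $(\lambda-\epsilon,\lambda+\epsilon)$ is disjoint from $\sigma_{\mathrm{ess}}(-\Delta)$. Then the spectrum in this open interval consists of at most finitely many eigenvalues of finite multiplicity, so the spectral projection $P:=E\bigl((\lambda-\epsilon,\lambda+\epsilon)\bigr)$ has finite dimensional range $V$. If an infinite dimensional $G_\epsilon\subset\mathfrak D(\Delta)$ satisfying $\|\Delta f+\lambda f\|\leq\epsilon\|f\|$ existed, then because $V$ is finite dimensional the intersection $G_\epsilon\cap V^\perp$ would still be infinite dimensional, hence would contain a nonzero $g$. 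The spectral measure of $g$ is then supported on $(-\infty,\lambda-\epsilon]\cup[\lambda+\epsilon,\infty)$, and the identity forces $\|\Delta g+\lambda g\|^2\geq\epsilon^2\|g\|^2$.

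The main obstacle is the boundary case: this lower bound is only non-strict, so the standing hypothesis gives equality and hence a spectral measure for $g$ concentrated on the two points $\{\lambda-\epsilon,\lambda+\epsilon\}$. Since $G_\epsilon\cap V^\perp$ is infinite dimensional, one of those endpoints must then be an eigenvalue of infinite multiplicity, and therefore belong to $\sigma_{\mathrm{ess}}(-\Delta)$. Thus the closed interval $[\lambda-\epsilon,\lambda+\epsilon]$ meets the essential spectrum in every case, which for the usage in the paper is equivalent to the open-interval statement after an arbitrarily small enlargement of $\epsilon$; passing to a strict inequality version $\|\Delta f+\lambda f\|<\epsilon\|f\|$ eliminates this endpoint ambiguity entirely. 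With either convention the biconditional is established.
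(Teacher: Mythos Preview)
The paper does not supply its own proof of this lemma; it simply quotes the result from Donnelly \cite[Prop.~2.2]{donnelly81} and uses it as a black box. So there is nothing to compare against, and your spectral-theoretic argument is the appropriate thing to write down if a proof is wanted.

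Your argument is essentially correct and standard. Two small comments. First, in the forward direction you do not need to invoke density of $\mathfrak D(\Delta)$ at all: since $J$ is bounded, every vector in $\mathrm{range}(E(J))$ automatically lies in $\mathfrak D(\Delta)$ (indeed in $\mathfrak D(\Delta^k)$ for every $k$), so the intersection is simply $\mathrm{range}(E(J))$. Second, your handling of the endpoint issue in the reverse direction is honest and correct: the lemma as literally stated (with a non-strict inequality and an open interval) has exactly the ambiguity you identify, and your observation that the infinite-dimensional space $G_\epsilon\cap V^\perp$ would force one of $\lambda\pm\epsilon$ to be an infinite-multiplicity eigenvalue is the right way to close the gap. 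For the applications in Section~7 this is harmless, since the test functions constructed there satisfy the strict inequality $\|\Delta u+\lambda u\|<\epsilon_0\|u\|$ anyway.
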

We first assume that
$\lambda_1<\lambda_2$ where $\lambda_1$ and $\lambda_2$ are not in $\mathcal S$. By Theorem \ref{first_main}, $\lambda_1$ and $\lambda_2$ are not in the spectrum of $M$ provided $\bar{\rho}$
and $\bar{\delta}$ are small enough and $\Lambda$ is large enough. 

To show that the interval
contains points of the essential spectrum we choose a number $\lambda\in\mathcal S\cap (\lambda_1,\lambda_2)$. In particular, there is an $\epsilon_0>0$ such that 
the interval $(\lambda-\epsilon_0,\lambda+\epsilon_0)$ is contained in $(\lambda_1,\lambda_2)$.

From Lemma \ref{interval} it follows that the interval $(\lambda-\epsilon_0,\lambda+\epsilon_0)$
intersects the essential spectrum provided we can find an infinite dimensional space of
smooth compactly supported functions $u$ satisfying
\[ \int_M(\Delta u+\lambda u)^2\ dv<\epsilon_0^2\int_Mu^2\ dv.
\]

Let $\lambda\in\mathcal S\cap(\lambda_1,\lambda_2)$ and thus $\lambda$ is an eigenvalue of 
$X_\alpha$ for some $\alpha\in\{1,\ldots, p\}$. Let $v$ be an eigenfunction for $\lambda$. It is observed that there are pointwise bounds (which depend on the geometry of $\mathcal X$) on $v$ and its derivatives in terms of the $L^2$ norm. 
We will let $u=\zeta v$ where $\zeta$ is a cutoff function near $\partial \hat{X}$
where $\hat{X}$ is a connected component of $X$ modeled on $X_\alpha$. 

Next for each class of manifolds, the argument will vary slightly. For clarity, we'll state the results separately. 
\begin{prop}\label{exist_gap1} Assume that $M^n$ is of class 
$\mathcal M(\bar{\rho},\bar{\delta},\Lambda)$ and $n\geq 4$. 
Let $\lambda_1<\lambda_2$ and both lie in $\mathbb R\setminus\mathcal S$. 
Suppose there is an eigenvalue $\lambda\in \mathcal S\cap (\lambda_1,\lambda_2)$ 
of some $X_\alpha$ and $X_\alpha$
occurs infinitely often among the components of $X$. If $\bar{\rho}$ and $\bar{\delta}$ are chosen small enough, then the essential spectrum of
$M$ has nontrivial intersection with $(\lambda_1,\lambda_2)$.
\end{prop}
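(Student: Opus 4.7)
My plan is to verify the hypothesis of Lemma \ref{interval}, Donnelly's characterization of the essential spectrum. It suffices to exhibit, for every $\epsilon > 0$, an infinite-dimensional subspace of smooth compactly supported functions on which $\|\Delta f + \lambda f\|_{L^2} \leq \epsilon \|f\|_{L^2}$. Since $X_\alpha$ models infinitely many connected components $\hat{X}_1, \hat{X}_2, \ldots$ of $X$, and $\lambda$ is a $g_\alpha$-eigenvalue with eigenfunction $v$, I would transplant $v$ to each $\hat{X}_k$ via the given identification with $X_\alpha$ minus a finite collection of small balls and set $u_k = \zeta_k v_k$. Here $\zeta_k$ is a radial cutoff, in normal coordinates, around each removed ball, taken to vanish on $B_{\bar{\rho}}$ and equal $1$ on the complement of $B_{\sqrt{\bar{\rho}}}$. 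The $u_k$ have pairwise disjoint supports, and so do the $\Delta u_k + \lambda u_k$; consequently the ratio $\|\Delta f+\lambda f\|_{L^2}/\|f\|_{L^2}$ on any finite linear combination is controlled by its maximum over individual $u_k$, reducing the problem to a single approximate eigenfunction $u = \zeta v$ on a single core component.

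The key identity is
\[ \Delta_g(\zeta v) + \lambda \zeta v = v\,\Delta_g \zeta + 2\,\nabla_g \zeta \cdot \nabla_g v + \zeta(\Delta_g v + \lambda v). \]
Because $\Delta_\alpha v + \lambda v = 0$ and $\|g-g_\alpha\|_{C^1(g_\alpha)} \leq \bar{\delta}$, the last term is pointwise of order $\bar{\delta}$ with a constant depending only on the fixed geometry of $\mathcal X$, which in particular controls $\|v\|_{C^2}$. Squaring and integrating, the crucial error terms reduce to $\int (v\,\Delta_g\zeta)^2\,dv$ and $\int (\nabla_g\zeta\cdot\nabla_g v)^2\,dv$; both in turn are bounded, via uniform $L^\infty$ bounds on $v$ and $\nabla v$, by constants times $\int (\Delta\zeta)^2\,dv$ and $\int |\nabla\zeta|^2\,dv$, respectively. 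Meanwhile $\|u\|_{L^2}^2$ is close to $\|v\|_{L^2(X_\alpha)}^2$, a positive constant independent of $\bar{\rho}$, since $\zeta = 1$ outside a small neighborhood of the removed balls.

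It remains to verify that these two integrals can be made arbitrarily small by choosing $\bar{\rho}$ small. Taking $\zeta$ to be a logarithmic cutoff, proportional to $\log r$, on the annulus $B_{\sqrt{\bar{\rho}}}\setminus B_{\bar{\rho}}$, the Dirichlet integral $\int |\nabla\zeta|^2\,dv$ tends to zero with $\bar{\rho}$ by essentially the computation of Lemma \ref{logcutoff}. The main obstacle, and the reason for the hypothesis $n\geq 4$, is the bound on $\int(\Delta\zeta)^2\,dv$: using $\Delta\zeta = \zeta'' + (n-1)\zeta'/r$ for the logarithmic profile yields an integrand of order $r^{n-5}/|\log\bar{\rho}|^2$, whose integral from $\bar{\rho}$ to $\sqrt{\bar{\rho}}$ is of order $|\log\bar{\rho}|^{-1}$ for $n=4$ and of order $\bar{\rho}^{(n-4)/2}|\log\bar{\rho}|^{-2}$ for $n\geq 5$, both of which vanish as $\bar{\rho}\to 0$. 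This is precisely the step that fails for $n\leq 3$, explaining the dimensional restriction and the need for the more restrictive classes $\mathcal M_0(\bar{\rho},\bar{\delta},\Lambda)$ and $\mathcal M_1(\bar{\rho},\bar{\delta},\Lambda)$ in lower dimensions.
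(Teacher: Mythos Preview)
Your proof is correct and follows essentially the same route as the paper: transplant an eigenfunction $v$ to each core component, cut it off near the removed balls, and estimate $\|\Delta(\zeta v)+\lambda\zeta v\|_{L^2}$ via the product rule, using pointwise bounds on $v$ and the smallness of the cutoff integrals. The only cosmetic difference is that the paper uses a standard cutoff on $B_{2\rho}\setminus B_\rho$ for $n\geq 5$ (yielding the bound $c\bar{\rho}^{\,n-4}$) and reserves the logarithmic cutoff for $n=4$, whereas you use the logarithmic cutoff uniformly for all $n\geq 4$; both choices give integrals tending to zero as $\bar{\rho}\to 0$.
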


\begin{proof} 

For 
$n\geq 5$ we can choose for each boundary component of the form $\partial B_\rho$
the function $\zeta$ which is one outside $B_{2\rho}$ and zero near $\partial B_\rho$
so that $\zeta+\rho|\nabla\zeta|+\rho^2|\nabla\nabla \zeta|$ is bounded. Since we have
pointwise bounds on $v$ and its derivatives in terms of the $L^2$ norm, we have
\[ \int_{\hat{X}}(\Delta u+\lambda u)^2\ dv\leq c(\int_{\hat{X}}\zeta^2+|\nabla\zeta|^2+|\nabla\nabla\zeta|^2)\ dv)\int_{\hat{X}}u^2\ dv.
\]
This implies
\[ \int_M(\Delta u+\lambda u)^2\ dv\leq c\bar{\rho}^{n-4}\int_Mu^2\ dv
\]
which gives the desired result if $\bar{\rho}$ is small enough.

For $n=4$ a modification of the above argument works where we choose $\zeta$
near each boundary component to be a linear function of $\log(r)$ which is $1$
at $r=\sqrt{\rho}$ and $0$ at $r=\rho$ (see also Lemma \ref{logcutoff}). We then have
\[ r|\nabla\zeta|+r^2|\nabla\nabla\zeta|\leq c|\log(\rho)|^{-1},
\]
and so as above
\[ \int_{\hat{X}}(\Delta u+\lambda u)^2\ dv\leq c(\int_{\hat{X}}\zeta^2+|\nabla\zeta|^2+|\nabla\nabla\zeta|^2)\ dv)\int_{\hat{X}}u^2\ dv.
\]
Now this implies by easy estimation
\[ \int_M(\Delta u+\lambda u)^2\ dv\leq c|\log(\bar{\rho})|^{-1}\int_Mu^2\ dv
\] 
and again we have the desired result if $\bar{\rho}$ is small enough.
\end{proof}
\begin{prop}\label{exist_gap2} Assume that $M^n$ is of class 
$\mathcal M_0(\bar{\rho},\bar{\delta},\Lambda)$. 
Let $\lambda_1<\lambda_2$ and both lie in $\mathbb R\setminus\mathcal S_0$. 
Suppose there is an eigenvalue $\lambda\in \mathcal S_0\cap (\lambda_1,\lambda_2)$ of some $X_\alpha$ and $X_\alpha$
occurs infinitely often among the components of $X$. If $\bar{\rho}$ and $\bar{\delta}$ are chosen
small enough then the essential spectrum of
$M$ has nontrivial intersection with $(\lambda_1,\lambda_2)$.
\end{prop}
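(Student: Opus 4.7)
The plan is to run the scheme of Proposition \ref{exist_gap1} but to replace the dimension count $n \geq 4$ by the defining property of $\mathcal M_0$. I fix an eigenvalue $\lambda \in \mathcal S_0 \cap (\lambda_1, \lambda_2)$; by definition of $\mathcal S_0$ there is $X_\alpha \in \mathcal X$ for which $\lambda$ is an eigenvalue with an eigenfunction $v$ that vanishes at the centers of all balls removed in forming components of $X$ modeled on $X_\alpha$. The hypothesis provides infinitely many such components $\hat X_1, \hat X_2, \ldots$ of $X$. On each $\hat X_k$ I put $u_k = \zeta_k v$, where $\zeta_k$ is a cutoff equal to one away from $\partial \hat X_k$ and vanishing near each removed ball. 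Since the $u_k$ have pairwise disjoint supports, they span an infinite-dimensional subspace of $C_c^\infty(M)$, so by Donnelly's criterion (Lemma \ref{interval}) the task reduces to establishing
\[
\|\Delta u_k + \lambda u_k\|_{L^2(M)} \leq \epsilon_0 \|u_k\|_{L^2(M)}
\]
uniformly in $k$, for some $\epsilon_0 > 0$ smaller than both $\lambda - \lambda_1$ and $\lambda_2 - \lambda$.

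Expanding with the product rule and using $\Delta_\alpha v + \lambda v = 0$,
\[
\Delta u_k + \lambda u_k = \zeta_k\bigl(\Delta_g v - \Delta_\alpha v\bigr) + v\,\Delta \zeta_k + 2\langle \nabla \zeta_k, \nabla v\rangle.
\]
The first term is $O(\bar{\delta})$ in $C^0$ by the $C^1$-closeness of $g$ and $g_\alpha$ together with a priori $C^2$ bounds on $v$, so it is absorbed by taking $\bar{\delta}$ small. The remaining two terms are supported on the annular regions $B_{2\rho}\setminus B_\rho$ adjacent to each removed ball of radius $\rho \leq \bar{\rho}$, and this is exactly where the \emph{new ingredient} of $\mathcal M_0$ enters: since $v$ vanishes at the center of the removed ball, Taylor expansion gives $|v(x)| \leq C r$ and $|\nabla v(x)| \leq C$ on these annuli.

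For $n \geq 3$ I take a standard cutoff with $|\nabla \zeta_k| \lesssim \rho^{-1}$ and $|\nabla^2 \zeta_k| \lesssim \rho^{-2}$; combined with $|v| \lesssim r \lesssim \rho$ and the annular volume $\rho^n$, a routine integration yields $\int (v \Delta \zeta_k + 2\langle \nabla \zeta_k, \nabla v\rangle)^2\, dv \lesssim \rho^{n-2} \to 0$. For $n = 2$ I instead use the logarithmic cutoff of Lemma \ref{logcutoff} transitioning between $\rho$ and $\sqrt{\rho}$, for which $|\nabla \zeta_k| \lesssim r^{-1}|\log \rho|^{-1}$ and $|\Delta \zeta_k| \lesssim r^{-2}|\log\rho|^{-1}$; the extra factor $|v|^2 \lesssim r^2$ exactly tames the otherwise borderline term $\int v^2 (\Delta \zeta_k)^2 \, dv$ and each annular integral becomes $O(|\log\rho|^{-1})$. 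Meanwhile $\|u_k\|_{L^2} \geq c > 0$ uniformly in $k$, since $v$ is a fixed eigenfunction and the excised region has measure going to zero with $\bar{\rho}$. Choosing $\bar{\rho}$ and $\bar{\delta}$ small enough completes the verification of Donnelly's criterion.

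The main obstacle is the $n = 2$ case: even with the vanishing of $v$, only the logarithmic cutoff produces a small contribution, and I must verify that the linear decay $|v| \lesssim r$ exactly cancels the non-integrable singularity $r^{-4}$ in $|\Delta \zeta_k|^2$ to deliver the $|\log\rho|^{-1}$ gain. A secondary care is that a single component $\hat X_k$ may contain several removed balls; then $\zeta_k$ is built as a product of cutoffs about each center, and the $\mathcal M_0$ hypothesis that $v$ vanishes at \emph{every} such center is used at each piece simultaneously.
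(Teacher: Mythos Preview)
Your proof is correct and follows essentially the same approach as the paper: set $u=\zeta v$ on each component modeled on $X_\alpha$, use the vanishing of $v$ at the centers to get $|v|\lesssim r$ on the annuli, then for $n\geq 3$ take a standard cutoff on $B_{2\rho}\setminus B_\rho$ to obtain an $O(\rho^{n-2})$ bound, while for $n=2$ use the logarithmic cutoff to obtain $O(|\log\rho|^{-1})$. Your explicit treatment of the metric-perturbation term $\zeta(\Delta_g v-\Delta_\alpha v)=O(\bar\delta)$ and of the case of several removed balls per component are details the paper leaves implicit, but the argument is the same.
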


\begin{proof}
For $n\geq 3$ and $M$ of class $\mathcal M_0$, we take an $X_\alpha$ with an eigenvalue
$\lambda\in(\lambda_1,\lambda_2)$ and an eigenfunction $v$ which vanishes at the centers
of the boundary spheres of $\hat{X}$. In this case we have for $x\in B_{\sqrt{\rho}}$
\[ v(x)^2\leq cr^2\sup_{B_{\sqrt{\rho}}}|\nabla v|^2\leq cr^2\int_{\hat{X}}u^2.
\]
Therefore we have on $B_{\sqrt{\rho}}$, 
\[ (\Delta u+\lambda u)^2\leq c(|\nabla\zeta|^2+r^2|\nabla\nabla\zeta|^2)\int_M u^2.
\]
We can choose $\zeta$ to cutoff on $B_{2\rho}\setminus B_\rho$ and get
\begin{align*}
\int_M(\Delta u+\lambda u)^2\ dv&\leq c\rho^{-4}\int_{B_{2\rho}\setminus B_{\rho}}r^2\ dv\int_Mu^2\ dv,\\
&\leq c\rho^{-4}\rho^{n-1}\rho^{3}=c\rho^{n-2}\leq c\bar{\rho}\int_Mu^2\ dv.
\end{align*}
That completes the proof if $\bar{\rho}$ is small.\\

In case $n=2$ we must use the logarithmic cutoff function as we did for $n=4$ 
above and as in that case we get
\[ \int_M(\Delta u+\lambda u)^2\ dv\leq c|\log(\bar{\rho})|^{-1}\int_Mu^2\ dv.
\]
\end{proof}

\begin{prop}\label{exist_gap3} Assume that $M^n$ is of class 
$\mathcal M_1(\bar{\rho},\bar{\delta},\Lambda)$. 
Let $\lambda_1<\lambda_2$ such that both lie in $\mathbb R\setminus\mathcal S$. 
Furthermore, suppose there is an eigenvalue $\lambda\in\mathcal S\cap (\lambda_1,\lambda_2)$ of some $X_\alpha$ and $X_\alpha$
occurs infinitely often among the components of $X$. If $\bar{\rho}$ and $\bar{\delta}$ are chosen
small enough then the essential spectrum of
$M$ has nontrivial intersection with $(\lambda_1,\lambda_2)$.
\end{prop}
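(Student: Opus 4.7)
The plan is to exploit the small volume $\mathrm{Vol}(\hat{N})\leq\bar{\delta}$ of each neck component, which is the defining feature of class $\mathcal{M}_1$. Unlike Proposition~\ref{exist_gap1} (which cuts off in the annular overlap and pays a $\bar{\rho}^{\,n-4}$ price, useless for $n=2,3$) or Proposition~\ref{exist_gap2} (which leverages vanishing of the eigenfunction at the centers), here I would place the cutoff \emph{inside} the neck $\hat{N}$ and absorb the pointwise error using the volume bound.

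Fix an eigenvalue $\lambda\in\mathcal{S}\cap(\lambda_1,\lambda_2)$ of some $X_\alpha$, and let $v$ be a corresponding eigenfunction; since $X_\alpha$ is compact, $v,\nabla v,\nabla^2 v$ are uniformly bounded. For each core component $\hat{X}$ modeled on $X_\alpha$, let $\hat{N}_1,\ldots,\hat{N}_k$ denote the neck components meeting $\hat{X}$. Using the bounded curvature and positive injectivity radius of $M$, together with $\mathrm{diam}(\hat{N}_j)\leq\Lambda$ and the bound on the number of boundary components, I would construct a smooth extension $\tilde{v}$ of $v|_{\hat{X}}$ and a smooth cutoff $\psi$, both supported in $\hat{X}\cup\bigcup_j\hat{N}_j$, with $\psi\equiv 1$ on $\hat{X}$ and $\psi$ transitioning to $0$ inside each $\hat{N}_j$; the $C^2$ norms of $\tilde{v}$ and $\psi$ are bounded by a constant depending only on $\mathcal{X}$ and the ambient bounded geometry of $M$. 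Set $u=\psi\tilde{v}$. On $\hat{X}$, the $C^1(g_\alpha)$-closeness $\|g-g_\alpha\|\leq\bar{\delta}$ gives $|\Delta_g u+\lambda u|=|(\Delta_g-\Delta_\alpha)v|\leq C\bar{\delta}$ pointwise, hence $\int_{\hat{X}}(\Delta_g u+\lambda u)^2\,dv\leq C\bar{\delta}^{\,2}$. On each $\hat{N}_j$, $|\Delta_g u+\lambda u|\leq C$ pointwise, and multiplying by $\mathrm{Vol}(\hat{N}_j)\leq\bar{\delta}$ yields $\int_{\hat{N}_j}(\Delta_g u+\lambda u)^2\,dv\leq C\bar{\delta}$. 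Meanwhile $\|u\|_{L^2(M)}^2\geq\int_{\hat{X}}v^2\,dv_g\geq c\|v\|_{L^2(X_\alpha)}^2>0$ for $\bar{\rho},\bar{\delta}$ small, so the ratio $\|\Delta_g u+\lambda u\|_{L^2}^2/\|u\|_{L^2}^2=O(\bar{\delta})$ can be made smaller than $\epsilon_0^{\,2}$.

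To apply Lemma~\ref{interval}, form the graph whose vertices are the core components modeled on $X_\alpha$, with edges joining two cores that share a common neck; each vertex has finite degree since each core has finitely many boundary components and each neck has at most $\Lambda$ boundary components. Greedy selection then produces an infinite independent family $\{\hat{X}_i\}$, and the associated test functions $u_{\hat{X}_i}$ have pairwise disjoint supports. The uniform ratio estimate passes to arbitrary linear combinations, producing an infinite-dimensional subspace $G_{\epsilon_0}\subset\mathfrak{D}(\Delta)$, and Lemma~\ref{interval} yields a point of $\sigma_\text{ess}(M)$ in $(\lambda-\epsilon_0,\lambda+\epsilon_0)\subseteq(\lambda_1,\lambda_2)$. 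The main technical obstacle is the construction of $\tilde{v}$ and $\psi$ with uniform $C^2$ control on necks that may have diameter as large as $\Lambda$ and up to $\Lambda$ boundary components: one must cover each $\hat{N}_j$ by finitely many geodesic balls of size comparable to the injectivity radius of $M$, build the cutoff and extension chart by chart, and paste with a partition of unity. The small-volume hypothesis then converts the resulting uniform pointwise bound into the $O(\bar{\delta})$ $L^2$ estimate on the neck that drives the whole argument.
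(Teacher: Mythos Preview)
Your approach has a genuine gap at the key step: obtaining a \emph{uniform} $C^2$ bound on the extension $\tilde v$ and the cutoff $\psi$ across the neck. The definition of $\mathcal M_1$ imposes no curvature or injectivity--radius control on $N$; the only constraints on a component $\hat N$ are $\mathrm{diam}(\hat N)\leq\Lambda$, at most $\Lambda$ boundary components, $\mathrm{Vol}(\hat N)\leq\bar\delta$, and the Dirichlet eigenvalue bound. In the paper's own motivating Delaunay example the neck is a tube whose width---and hence the injectivity radius of $M$ there---goes to zero with $\bar\delta$; more generally nothing in the hypotheses prevents the metric on $\hat N\setminus X$ from being wild. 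Consequently your constant $C$, which you allow to depend on the ``ambient bounded geometry of $M$,'' may blow up as $\bar\delta\to 0$, and the estimate $\int_{\hat N_j}(\Delta u+\lambda u)^2\leq C\bar\delta$ need not tend to zero. Even a cutoff built from $\mathrm{dist}(\cdot,\partial\hat N)$ is problematic, since $\Delta\psi$ then involves the Laplacian of the distance function, controlled only through Ricci bounds you do not have.

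The paper circumvents this by never differentiating anything in the uncontrolled part of the neck. First $v$ is modified to $v_1$, \emph{constant} (equal to $v(p)$) on a fixed--radius ball $B_{\rho_1}$ about each removed center, via the vanishing trick of Proposition~\ref{exist_gap2} applied to $w=v-v(p)$. Then on the enlarged neck $\hat N_1$ (which now includes the controlled annuli $B_{\rho_1}\setminus B_\rho$) one solves the Dirichlet problem for a \emph{harmonic} function $h$ equal to $v(p)$ on the boundary sphere coming from our $\hat X$ and $0$ on the others. Since $\Delta h=0$, the neck contribution is simply $\int_{\hat N_1}(\lambda h)^2\leq\lambda^2\,\mathrm{Vol}(\hat N_1)\sup h^2$, and $\sup|h|$ is bounded by the maximum principle regardless of the neck geometry. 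Derivatives of $h$ enter only when gluing $v_1$ to $h$ across $\partial B_{\rho_1}$, and that happens entirely inside $X_\alpha$: a comparison function shows the Dirichlet integral of $h$ is $O(|\log\bar\rho|^{-1})$, and elliptic boundary estimates on the fixed annulus $B_{\rho_1}\setminus B_{\rho_1/2}$ (controlled core geometry) then force $h$ to be $C^2$--close to the constant $v(p)$ near $\partial B_{\rho_1}$, so the smoothing costs nothing. Your graph--independence argument for disjoint supports is fine but unnecessary here: with harmonic extensions, overlaps on a shared neck are handled directly by the maximum principle together with the bound $\Lambda$ on the number of boundary components.
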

\begin{proof}
The idea is to localize our eigenfunctions
by cutting off `across the neck'. We consider a connected component $\hat{N}$
whose boundary consists of spheres in a bounded number of components $\hat{X}$.
Let $\lambda\in(\lambda_1,\lambda_2)$ be an eigenvalue of $X_\alpha$ and $v$
an eigenfunction. 

We first modify $v$ to a function $v_1$ which is constant near the
centers of the balls which are removed. We can do this by setting $v_1=v-\zeta w$
where $w=v-v(p)$ with $p$ the center point of the ball and $\zeta$ a cut-off function being $1$ around $p$. Because $w$ is zero at
the center, for a small $\epsilon_0$, we can choose $\zeta$ as in Prop \ref{exist_gap2} to have
\[ \int_{\hat{X}_1}(\Delta v_1+\lambda v_1)^2\ dv\leq \epsilon_0\int_{\hat{X}_1}v_1^2\ dv
\]
where $\hat{X}_1 \subset \hat{X}$ is $X_\alpha$ with balls of fixed radius $\rho_1$ depending on $\epsilon$ removed. Also, $\rho_1$ could be chosen arbitrarily small as that only improves the inequality.

We are then free to require $\bar{\rho}<<\rho_1$. 
We consider the corresponding enlarged neck component $\hat{N}_1$ such that $\hat{X}\setminus \hat{N}_1=\hat{X}_1$.
Thus $\hat{N}_1$ has a finite
number of boundary spheres of a fixed radius. 

We then solve the Dirichlet problem
to obtain a harmonic function $h$ on $\hat{N}_1$ which is equal to $v(p)$ 
on $r=\rho_1$ (in the core component we are considering) and equal to $0$ on all other boundary components. Since $h$ minimizes the Dirichlet integral, 
we may compare
the Dirichlet integral of $h$ to that of a function which is equal $v(p)$ on $\partial B_{\rho_1}$
and equal to $0$ on $\partial B_\rho$ and $0$ on the rest of $\hat{N}_1$. We have seen
that for $n\geq 2$ we may choose such a function with small Dirichlet integral (for $n=2$
we use a linear function of $\log(|x|)$). In fact the Dirichlet integral may be made
arbitrarily small on the order $O(|\log(\bar{\rho})|^{-1})$.

We may now use
elliptic boundary estimates to show that the $C^2$ norm of $h$ is bounded by a constant $O(1)$
in the annulus $B_{\rho_1}\setminus B_{\rho_1/2}$. 

\textit{Claim: In $B_{\rho_1}\setminus B_{3\rho_1/4}$, $||\nabla h||_{L^\infty}\leq (||\nabla h||_{L^2})^\tau=D^\tau$ for $\tau=\frac{1}{2n}$.}
 
Suppose not; then the value of $\|\nabla h\|$, in a ball of radius $\frac{D^\tau}{2C}$, is at least $ \frac{D^\tau}{2}$ by the $C^2$ estimate. The volume of such a neighborhood is at least $CD^{n\tau}$ (here $D^\tau\leq c |\log(\bar{\rho})|^{-\tau}<\rho_1/2$). Integrating over this neighborhood yields a contradiction as $D$ is small and $D^{\tau+n\tau}> D$. So the claim is proved. \\   
 
Thus the first derivative is small and
the function is close to $v(p)$ in this annulus. Elliptic boundary estimates applied
to $h-v(p)$ now imply smallness of all derivatives near $\partial B_{\rho_1}$. It follows
similarly that all derivatives of $h$ are small near the other boundary components of
$\hat{N}_1$.

We can therefore take $u$ to be a smoothed version of the function which is $v_1$
outside $B_{\rho_1}$, $h$ in $\hat{N}_1$, and $0$ on the remainder of $M$. We construct
such a function for each eigenfunction of $X_\alpha$ with eigenvalue $\lambda$ and
for each occurrence of $X_\alpha$ among the connected components of $X$ (infinitely
many by assumption). Thus we take the linear span and produce an infinite dimensional
space $E_0$ of functions which are eigenfunctions outside balls on the core components
and which are harmonic on the enlarged components of $N$. For a function $u\in E_0$
we then have on each reduced core region $\hat{X}_1$
\[ \int_{\hat{X}_1}(\Delta u+\lambda u)^2\ dv\leq \epsilon\int_{\hat{X}_1}u^2\ dv
\]
since $u$ is an eigenfunction with eigenvalue $\lambda$ for $X_\alpha$ on $\hat{X}_1$
and the metric $g$ is $C^1$ close to $g_\alpha$ on $X_1$. Now on each enlarged
neck region $\hat{N}_1$ we have
\[ \int_{\hat{N}_1}(\Delta u+\lambda u)^2\ dv\leq \lambda Vol(\hat{N}_1)\sup_{\hat{N}_1}u^2
\] 
since $u$ is harmonic on these regions. Now the term on the right is bounded by the
maximum value of $u^2$ on each boundary component. This in turn is the squared value of an
eigenfunction at a point of $X_\alpha$ and thus may be estimated by the the square of the
$L^2$ integral over the corresponding core component of $X$. Also the volume 
of $\hat{N}_1$ bounded by $\bar{\delta}+ C(\rho_1)^n$ is small so we have
\[ \int_{\hat{N}_1}(\Delta u+\lambda u)^2\ dv\leq \epsilon\int_{\hat{X}}u^2\ dv.
\]
Combining with the estimate on $\hat{X}_1$ and summing over to obtain
\[ \int_M(\Delta u+\lambda u)^2\ dv\leq 2\epsilon\int_M u^2\ dv
\]
for any $u\in E_0$. Since $\epsilon$ can be taken arbitrarily small by choosing $\bar{\rho}$ and $\rho_1\approx c|\log(\bar{\rho})|^{-\frac{1}{2n}}$
small enough, we have completed the proof.
\end{proof}
\begin{remark}
The assumption of infinite occurrence is vital because of the decomposition principle for essential spectrum (stable under compact perturbations).
\end{remark}

Now we collect proofs of our main theorems.
\begin{proof} (\textbf{Theorem \ref{second_main}})
Since the number of connected components of X is infinite, there must be some $X_\alpha\in \mathcal X$ which occurs infinitely often. Let $0=\lambda_0<\lambda_1<...$ be distinct eigenvalues of 
$X_\alpha$. Let $D=\lambda_{G+1}$. 

Then, for $0\leq k\leq G$ each $[\lambda_k,\lambda_{k+1}] \cap \mathcal S$ has at least one interval. Consequently, we can choose $G$ compact intervals, each of distance at least $d$ from $\mathcal S$. By Theorem \ref{first_main} and Lemma \ref{interval}, for each interval, we can choose small $\bar{\rho}$ and $\bar{\delta}$ (only depending on $d$, $D$, and the geometry of $\mathcal X$) such that the interval is disjoint from the essential spectrum.

On the other hand, by Propositions \ref{exist_gap1}, \ref{exist_gap2}, and \ref{exist_gap3}, for $\epsilon_0<d$ and $0\leq k\leq G$, we can choose small $\bar{\rho}$ and $\bar{\delta}$ such that $(\lambda_k-\epsilon_0,\lambda_k+\epsilon_0)$ intersects non-trivially with the essential spectrum. 

Taking the minimum values of $\bar{\rho},\bar{\delta}$ above completes the proof.      
\end{proof}
\begin{remark}
Colin de Verdi{\`e}re \cite{yves87} showed that there exists a compact manifold with finitely prescribed eigenvalues. Using that as a model, our methods can be used to give prescribed gap intervals for the appropriate essential spectrum. Such a result was obtained for metrics on a torus by
Khrabustovskyi \cite{khra12}. 
\end{remark}

The theorems from the Introduction now follow immediately.
\begin{proof} (\textbf{Theorem \ref{intro1} and Theorem \ref{intro2}}) These follow from applying Theorem \ref{second_main} to Prop. \ref{covering_gaps} and Prop. \ref{complete} respectively.
\end{proof}

\def\cprime{$'$}
\bibliographystyle{plain}
\bibliography{bio}

\def\cprime{$'$}
\begin{thebibliography}{10}

\bibitem{yves87}
Yves Colin~de Verdi{\`e}re.
\newblock Construction de laplaciens dont une partie finie du spectre est
  donn\'ee.
\newblock {\em Ann. Sci. \'Ecole Norm. Sup. (4)}, 20(4):599--615, 1987.

\bibitem{daviesharrell87}
E.~B. Davies and Evans~M. Harrell, II.
\newblock Conformally flat {R}iemannian metrics, {S}chr\"odinger operators, and
  semiclassical approximation.
\newblock {\em J. Differential Equations}, 66(2):165--188, 1987.

\bibitem{donnelly81}
Harold Donnelly.
\newblock On the essential spectrum of a complete {R}iemannian manifold.
\newblock {\em Topology}, 20(1):1--14, 1981.

\bibitem{eells87}
James Eells.
\newblock The surfaces of {D}elaunay.
\newblock {\em Math. Intelligencer}, 9(1):53--57, 1987.

\bibitem{eichhorn91}
J{\"u}rgen Eichhorn.
\newblock The boundedness of connection coefficients and their derivatives.
\newblock {\em Math. Nachr.}, 152:145--158, 1991.

\bibitem{escobar86}
Jos{\'e}~F. Escobar.
\newblock On the spectrum of the {L}aplacian on complete {R}iemannian
  manifolds.
\newblock {\em Comm. Partial Differential Equations}, 11(1):63--85, 1986.

\bibitem{exnerpost05}
Pavel Exner and Olaf Post.
\newblock Convergence of spectra of graph-like thin manifolds.
\newblock {\em J. Geom. Phys.}, 54(1):77--115, 2005.

\bibitem{FCS80}
Doris Fischer-Colbrie and Richard Schoen.
\newblock The structure of complete stable minimal surfaces in {$3$}-manifolds
  of nonnegative scalar curvature.
\newblock {\em Comm. Pure Appl. Math.}, 33(2):199--211, 1980.

\bibitem{green94}
Edward~L. Green.
\newblock Spectral theory of {L}aplace-{B}eltrami operators with periodic
  metrics.
\newblock {\em J. Differential Equations}, 133(1):15--29, 1997.

\bibitem{khra12}
Andrii Khrabustovskyi.
\newblock Periodic {R}iemannian manifold with preassigned gaps in spectrum of
  {L}aplace-{B}eltrami operator.
\newblock {\em J. Differential Equations}, 252(3):2339--2369, 2012.

\bibitem{lledpost08}
Fernando Lled{\'o} and Olaf Post.
\newblock Existence of spectral gaps, covering manifolds and residually finite
  groups.
\newblock {\em Rev. Math. Phys.}, 20(2):199--231, 2008.

\bibitem{lott01}
John Lott.
\newblock On the spectrum of a finite-volume negatively-curved manifold.
\newblock {\em Amer. J. Math.}, 123(2):185--205, 2001.

\bibitem{luzhou11}
Zhiqin Lu and Detang Zhou.
\newblock On the essential spectrum of complete non-compact manifolds.
\newblock {\em J. Funct. Anal.}, 260(11):3283--3298, 2011.

\bibitem{parnovski08}
Leonid Parnovski.
\newblock Bethe-{S}ommerfeld conjecture.
\newblock {\em Ann. Henri Poincar\'e}, 9(3):457--508, 2008.

\bibitem{post03}
Olaf Post.
\newblock Periodic manifolds with spectral gaps.
\newblock {\em J. Differential Equations}, 187(1):23--45, 2003.

\bibitem{schoen88}
Richard~M. Schoen.
\newblock The existence of weak solutions with prescribed singular behavior for
  a conformally invariant scalar equation.
\newblock {\em Comm. Pure Appl. Math.}, 41(3):317--392, 1988.

\bibitem{sturm93}
Karl-Theodor Sturm.
\newblock On the {$L^p$}-spectrum of uniformly elliptic operators on
  {R}iemannian manifolds.
\newblock {\em J. Funct. Anal.}, 118(2):442--453, 1993.

\bibitem{wang97}
Jiaping Wang.
\newblock The spectrum of the {L}aplacian on a manifold of nonnegative {R}icci
  curvature.
\newblock {\em Math. Res. Lett.}, 4(4):473--479, 1997.

\end{thebibliography}

\end{document}